\newcommand{\no}[1]{#1}
\renewcommand{\no}[1]{}
\renewcommand{\Delta}{\upDelta}}
\date{\today}
\newcommand{\bel}{\begin{equation} \label}
\newcommand{\ee}{\end{equation}}
\def\beq{\begin{equation}}
\def\eeq{\end{equation}}
\newcommand{\bea}{\begin{eqnarray}}
\newcommand{\eea}{\end{eqnarray}}
\newcommand{\beas}{\begin{eqnarray*}}
\newcommand{\eeas}{\end{eqnarray*}}
\newcommand{\R}{\mathbb{R}}
\newtheorem{theorem}{Theorem}[section]
\newtheorem{corollary}[theorem]{Corollary}
\newtheorem{proposition}[theorem]{Proposition}
\numberwithin{equation}{section}
\providecommand{\abs}[1]{\left\lvert#1\right\rvert}
\providecommand{\norm}[1]{\left\lVert#1\right\rVert}
\def\phi {\varphi}
\title[Determination of source term for Stokes systems]
{Determination of time dependent source terms for Stokes systems in  unbounded domains}
\author{{Adel Blouza, L\'eo Glangetas, Yavar Kian, Van-Sang Ngo}}
\address{Univ Rouen Normandie, CNRS, Normandie Univ, LMRS UMR 6085, F-76000 Rouen, France}
\subjclass{35R30, 35Q30, 65N21, 76D03}
\keywords{Inverse source problems, Stokes system, uniqueness, unbounded domain}
\email{adel.blouza@univ-rouen.fr}
\email{leo.glangetas@univ-rouen.fr} 
\email{yavar.kian@univ-rouen.fr}
\email{van-sang.ngo@univ-rouen.fr}
\begin{document}
\begin{abstract} 
This article is devoted to the analysis of inverse source problems for Stokes systems in unbounded domains where  the corresponding velocity flow is observed on a surface. 
Our main objective  is to study the unique determination of general class of time-dependent and vector-valued source terms with potentially unknown divergence. 
Taking into account the challenges inherent in  this class of inverse source problems, we aim to identify the most precise conditions that ensure their  resolution. 
Motivated by various fluid motion problems,  we explore several class of boundary measurements. 
Our proofs are based on different arguments,  including unique continuation properties   for Stokes systems, approximate controllability, complex analysis, and the application of explicit harmonic functions and explicit solutions for Stokes systems. 
This analysis is complemented by a reconstruction algorithm and   examples of numerical computations.  
\end{abstract}

\maketitle

\section{Introduction}

In this article, we consider the following Stokes system
\begin{equation}\label{eq1}
\begin{cases}
\partial_tu -\nu\Delta u+\nabla p =  F(x,t) & \mbox{in }\R^n\times(0,T),\\
 \nabla\cdot u= 0 & \mbox{in } \R^n\times(0,T), \\
u(x,0)=u_0(x) & x\in\R^n,
\end{cases}
\end{equation}
where the constant $\nu>0$ denotes the viscosity of the fluid, $p$ the pressure, the time $T>0$ fixed, the space dimension $n\geq2$ and where the external force $F\in L^2(0,T;L^2(\R^n))^n$ and the initial data $u_0\in L^2(\R^n)^n$ satisfy $$\nabla\cdot F\in L^2(0,T;L^2(\R^n)),\ \nabla\cdot u_0\equiv 0.$$ The main goal of our work is to study inverse source problems for general class of time dependent source terms in an unbounded domain and to provide general conditions which guarantee the resolution of such inverse problems. Our analysis will be complemented with numerical computation based on Tikhonov regularization algorithm stated in Section~5.

The system \eqref{eq1} describes the motion of low-speed Newtonian fluids, such as gases or liquids, generated by the  external source $F$ as well as the initial velocity flow $u_0$ and propagating in the whole space $\R^n$. Such model can be used for describing different physical phenomenon associated with the weather, ocean currents in the context of different applications including the design of aircraft and cars,  the design of power stations and the analysis of pollution. The goal of our inverse problem analysis consists in detecting the unknown source $F$ from the knowledge of the flow velocity $u$, measured on a surface $S$ located outside the support of $F$. Besides, we will take into account the evolution in time of the source $F$, which can have significant practical applications. Note also that the  knowledge of the source $F$ implies the  knowledge of the velocity flow $u$ everywhere. In that sense, our inverse problem can be equivalently formulated as the determination of the velocity flow $u$ everywhere from its knowledge on the surface $S$.

In this paper, we assume that $u_0$ is given and supp$(F)\subset \Omega\times[0,T]$ where $\Omega$ is a bounded open and simply connected set of $\R^n$ with Lipschitz boundary. Along with the set $\Omega$, we consider an open set $\mathcal O$ with Lipschitz boundary lying in $\R^n\setminus\bar{\Omega}$ (see Figure \ref{fig1}). We introduce the following space of compactly supported smooth functions 
$$\Upsilon:=\{f\in C^\infty_0(\R^n)^n:\ \nabla\cdot f\equiv0\}$$
and denote by $\mathbb H$ (\textit{resp.} $\mathbb V$) the closure of $\Upsilon$ in $L^2(\R^n)^n$ (\textit{resp.} in $H^1(\R^n)^n$) endowed  with the scalar product of $L^2(\R^n)^n$ (\textit{resp.} of $H^1(\R^n)^n$). We also denote by $\mathbb V'$ the dual space of $\mathbb V$.
It is well known (see for instance \cite[Theorem 1.1, Chapter III]{Te} and \cite[Proposition 1.1 and 1.2, Chapter I]{Te}) that problem \eqref{eq1} admits a unique solution $u\in L^2(0,T;\mathbb V)\cap H^1(0,T;\mathbb V')$ with $p\in L^2(0,T;L^2_{loc}(\R^n))$. In our work, we study the unique determination problem of some class of time dependent source $F$ from the knowledge of 
$$u(x,t),\quad x\in S,\ t\in(0,T),$$
when $S=\partial\Omega$ or $S=\partial\mathcal O$.

\begin{figure}
\centering  
\begin{tikzpicture}

\draw[thick,  black!10!red] (0,0) ellipse (3.0cm and 2.0cm);
\draw (0,1.7) node[right, black!10!red] {$\Omega$};

\draw[thick, black!10!blue] (0,0) ellipse (2cm and 1cm);
\draw (0.2,0.0) node[black!10!blue] {$\text{supp} F$};

\draw[thick, black!40!green] (6,1) ellipse (1cm and .5cm);
\draw (6,1) node[black!40!green] {$\mathcal{O}$};

\end{tikzpicture}
\caption{The sets $\Omega$ and $\mathcal{O}$. \label{fig1}} 
\end{figure}
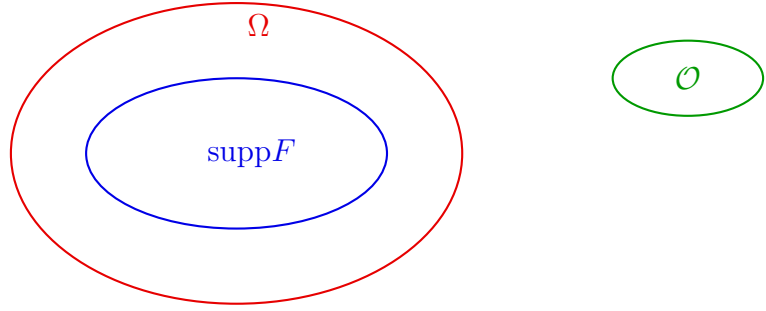

It is well known (see the Appendix for more details of the counter-examples) that it is impossible to solve this inverse problem for general time dependent source term $F$. For this purpose we restrict our analysis to two class of separated variable source terms. Namely, we consider the source terms of the following forms

-  Space-time separated variable case : 
\bel{source1} \quad F(x,t)=\sigma(t)f(x),\quad (x,t)\in\R^n\times(0,T),\ee

-  Multi-layer separated variable case : 
\bel{source2} F(x',x_n,t)=h(x_n)(G(x',t)^T,0)^T,\quad (x',x_n,t)\in\R^{n-1}\times\R\times(0,T),\ee
with $\sigma\in L^2(0,T)$, $f\in L^2(\R^n)^n$, $h\in L^2(\R)$, $G\in L^2(\R^{n-1}\times(0,T))^{n-1}$. Note that sources of the form \eqref{source2} will only be considered for $n\geq3$.

Despite  their interest and physical relevance, inverse problems and more broadly  identification problems associated with motion of fluids are not yet well understood. Nevertheless, several works have been devoted to the study of different identification problems including the detection of an object immersed in a fluid \cite{ACFK,CCOR}, the determination of viscosity from boundary measurements \cite{LUW,LW} or the inverse source problems. We recall that inverse source problems can be distinguished by their important applications in different scientific and mathematical areas (see e.g. \cite{I} for an overview).  In the context of motion of fluids, most of the existing mathematical literature about inverse source problems have been stated for a linearized Navier-Stokes or Stokes systems in a bounded domain  and their results were based on applications of Carleman estimates in the spirit of the seminal work of \cite{BK}. In that category, we can mention the works of \cite{CIPY, GMO,ILY} where the determination of a time-independent and divergence-free source was proved for some class of internal data including the full knowledge of the solution at one fixed time. Such approach was extended by \cite{GT} to the stable determination of similar class of source terms from boundary measurements.

All the above mentioned results considered the determination of a divergence free (or real valued) and time independent source term on a bounded domain. In \cite{M} the author studied the determination of a general class of source terms without imposing any condition on its divergence for steady state  Stokes equations (see also \cite{IY1} for the study of this problem with the extra knowledge of pressure $p$) and the work of \cite{IY2} concerned the determination of a specific class of time dependent and divergence free source terms. As far as we know, a general analysis of determination of time dependent source terms, similar to what has already been considered for other class of partial differential equations (see for instance \cite{KLY,KSXY}), is still missing for Stokes systems. Moreover, in contrast to other classes of systems of partial differential equations (see e.g. \cite{AET,HK,HKLZ,Z}), inverse source problems in an unbounded domain for Stokes systems, which appears naturally for different physical phenomenon related to weather or ocean currents have not been investigated in the mathematical literature.

This article is organized as follows. In Section \ref{sec2}, we introduce the main results, in Section \ref{spacetime} we prove Theorem \ref{t1} and Corollary \ref{c1} while Section \ref{multilayer} will be devoted to the proof of Theorem \ref{t2}. These theoretical results are complemented by a reconstruction algorithm stated in Section 5 and corresponding examples of numerical computation in Section 6. Finally, in the appendix, we recall some properties of the analyticity of the solutions of Stokes systems and we introduce general class of counter-examples for the determination of time dependent source terms, including sources of the form \eqref{source1}-\eqref{source2}, from our measurements.

\section{Main results} \label{sec2}

In this section, we introduce our main results, which consist in the unique determination of general classes of time-dependent source terms of the form \eqref{source1} and \eqref{source2}. We will also give comments about the novelty of our results, the technique used in the proofs and some possible extensions.

We start by considering source terms of the form \eqref{source1}.

\begin{theorem}\label{t1} 
 We consider two space-time separated variable source terms $F_j$ for $j=1,2$ given by \eqref{source1}  
\[ \quad F_j(x,t)=\sigma_j(t)f_j(x),\quad (x,t)\in\R^n\times(0,T),\]
where $\sigma_j\in L^2(0,T)$ with $\sigma_j\not\equiv0$, $f_j\in L^2(\R^n)^n$ with $\nabla\cdot f_j\in L^2(\R^n)$, $f_j\not\equiv0$ 
and supp$(f_j)\subset \Omega$.
Let us consider $u_j$ the solution of \eqref{eq1} with $F=F_j$ 
and assume 
also that one of the following conditions:\\
(i) $\nabla\cdot f_1=\nabla\cdot f_2$ and there exists a  function $\psi$ harmonic in $\R^n$ such that
\bel{t1aa}\int_{\R^n}f_1(x)\cdot\nabla \psi(x)dx\neq0,\ee
(ii) there exist  $T_1\in (0,T]$, $\sigma\in L^2(0,T_1)$ with $\sigma\not\equiv0$, $\mu\in L^2(\R)$, $g_j\in L^2(\R^{n-1})$, $j=1,2$, such that
$$\sigma_1|_{(0,T_1)}=\sigma_2|_{(0,T_1)}=\sigma\not\equiv0,$$
\bel{t1ab}\nabla\cdot f_j(x',x_n)=g_j(x')\mu(x_n),\quad x'\in\R^{n-1},\ x_n\in\R,\ j=1,2,\ee
is fulfilled.

Then the condition 
\bel{t1a} u_1(x,t)=u_2(x,t),\quad x\in S,\ t\in(0,T),\ee
with $S=\partial\Omega$ or $S=\partial\mathcal O$ implies that
\bel{t1b} \sigma_1(t)=\sigma_2(t),\quad f_1(x)=f_2(x),\quad x\in \R^n,\ t\in(0,T),\ee
holds true.
\end{theorem}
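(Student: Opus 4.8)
The plan is to work with the difference $w=u_1-u_2$, which solves the Stokes system \eqref{eq1} with zero initial data and source $F_1-F_2=\sigma_1 f_1-\sigma_2 f_2$ supported in $\Omega\times[0,T]$, and which by \eqref{t1a} vanishes on $S\times(0,T)$. The first step is to upgrade this vanishing on $S$ to vanishing on the whole exterior. Since $F_1-F_2$ is supported in $\Omega$, $w$ solves the homogeneous Stokes system on $(\R^n\setminus\bar\Omega)\times(0,T)$. When $S=\partial\Omega$, an energy estimate on the exterior domain (using $w|_{\partial\Omega}=0$, $w(\cdot,0)=0$ and the $H^1$-decay of $w$ at infinity) already forces $w\equiv0$ there; when $S=\partial\mathcal O$, I would first obtain $w\equiv0$ inside $\mathcal O$ by the same energy argument and then invoke the unique continuation property for the nonstationary Stokes system to propagate the vanishing across the connected open set $\R^n\setminus\bar\Omega$ (connectedness being guaranteed by $\Omega$ bounded and simply connected). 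The outcome is that for a.e.\ $t$ the field $w(\cdot,t)\in H^1(\R^n)^n$ is divergence free and \emph{compactly supported} in $\bar\Omega$.

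Next I would extract the time profiles. Because $w(\cdot,t)$ is divergence free and compactly supported, testing the equation for $w$ against $\nabla\psi$, for an arbitrary $\psi$ harmonic on $\R^n$, produces no boundary terms: integrating by parts over $\R^n$, the inertial term equals $\frac{d}{dt}\int_{\R^n} w\cdot\nabla\psi=-\frac{d}{dt}\int_{\R^n}(\nabla\cdot w)\psi=0$, the viscous term $\int_{\R^n}\nabla w:\nabla^2\psi=-\int_{\R^n}w\cdot\nabla(\Delta\psi)=0$, and the pressure drops since $\nabla\psi$ is divergence free. This yields, for a.e.\ $t$,
\[ \sigma_1(t)a_1(\psi)=\sigma_2(t)a_2(\psi),\qquad a_j(\psi):=\int_{\R^n}f_j\cdot\nabla\psi\,dx=-\int_{\R^n}(\nabla\cdot f_j)\,\psi\,dx. \]
Under hypothesis (i), $\nabla\cdot f_1=\nabla\cdot f_2$ gives $a_1(\psi)=a_2(\psi)$ for every $\psi$, while \eqref{t1aa} furnishes a $\psi$ with $a_1(\psi)\neq0$; hence $\sigma_1\equiv\sigma_2=:\sigma$ on $(0,T)$. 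Under hypothesis (ii) I would instead use the agreement $\sigma_1=\sigma_2=\sigma\not\equiv0$ on $(0,T_1)$ to get $a_1(\psi)=a_2(\psi)$ for all $\psi$, and then test with the explicit harmonic functions $\psi(x',x_n)=e^{ik\cdot x'}e^{\pm|k|x_n}$: the separable form \eqref{t1ab} turns the moment identity into $\widehat{(g_1-g_2)}(k)\,M_\pm(|k|)=0$, where $M_\pm(\rho)=\int_{\R}\mu(x_n)e^{\pm\rho x_n}dx_n$ are entire in $\rho$ (by the bounded support inherited from $\Omega$) with isolated zeros, forcing $g_1=g_2$, i.e. $\nabla\cdot f_1=\nabla\cdot f_2$.

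The heart of the matter is to recover the spatial profile. Writing $A=-\nu\mathbb P\Delta$ for the Stokes operator and using that on $\R^n$ the Leray projection commutes with the heat semigroup, the difference is represented as $w(t)=\int_0^t e^{\nu(t-s)\Delta}\,\mathbb P\big(\sigma_1(s)f_1-\sigma_2(s)f_2\big)\,ds$. In case (i), where $\sigma_1=\sigma_2=\sigma$ and $f_1-f_2$ is divergence free, this reduces to $w(t)=\int_0^t\sigma(s)\,e^{\nu(t-s)\Delta}(f_1-f_2)\,ds$. Fixing any $x_0\notin\bar\Omega$, the exterior vanishing reads $\sigma*\Phi(\cdot,x_0)=0$ on $(0,T)$ with $\Phi(\tau,x_0)=e^{\nu\tau\Delta}(f_1-f_2)(x_0)$. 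Since $\tau\mapsto\Phi(\tau,x_0)$ is real-analytic on $(0,\infty)$ and $\sigma\not\equiv0$, the Titchmarsh convolution theorem forces $\Phi(\cdot,x_0)\equiv0$ on $(0,T)$, hence for all $\tau>0$ by analyticity; as $e^{\nu\tau\Delta}(f_1-f_2)$ is also real-analytic in $x$ and $\R^n$ is connected, we conclude $e^{\nu\tau\Delta}(f_1-f_2)\equiv0$ for every $\tau>0$, whence $f_1=f_2$ upon letting $\tau\to0^+$.

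In case (ii) I would run the same representation on the subinterval $(0,T_1)$: the Titchmarsh/analyticity argument there gives $\mathbb P(f_1-f_2)=0$, so $f_1-f_2$ is a gradient, and combined with $\nabla\cdot(f_1-f_2)=0$ and compact support (Liouville for harmonic functions) this yields $f_1=f_2$. A short argument then excludes $f_1$ being a pure gradient, for otherwise $f_1=\nabla\phi$ with $\phi$ compactly supported would make $\nabla\cdot f_1=\Delta\phi$ orthogonal to all entire harmonics, and testing again with $e^{ik\cdot x'}e^{\pm|k|x_n}$ would force $g_1=0$ and hence $f_1\equiv0$; thus $\mathbb Pf_1\neq0$, and a final application of Titchmarsh to the separable source $(\sigma_1-\sigma_2)f_1$ on $(0,T)$ gives $\sigma_1=\sigma_2$. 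The step I expect to be the main obstacle is the very first one in the case $S=\partial\mathcal O$: converting a single-surface measurement into compact support of $w$ relies on the unique continuation property for the time-dependent Stokes system across the exterior domain, which is the delicate qualitative input; the second genuinely delicate point is justifying the heat-semigroup representation together with the joint space-time analyticity that legitimizes the Titchmarsh convolution argument.
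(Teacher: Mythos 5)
Your proposal is correct, and its skeleton coincides with the paper's proof: your first step (exterior vanishing of $w=u_1-u_2$ via an energy estimate when $S=\partial\Omega$, and via uniqueness in $\mathcal O$ plus the Fabre--Lebeau unique continuation theorem when $S=\partial\mathcal O$) and your second step (pairing with entire harmonic functions to identify $\sigma_1=\sigma_2$ under (i), and to identify $\nabla\cdot f_1=\nabla\cdot f_2$ under (ii) through entire-function and Fourier arguments applied to the separated form \eqref{t1ab}) are exactly the paper's Steps 1 and 2, up to presentation: the paper works with the pressure, deriving $\Delta p=(\sigma_1\nabla\cdot f_1-\sigma_2\nabla\cdot f_2)$ with $p$ compactly supported, rather than testing the momentum equation against $\nabla\psi$; either way one must first normalize $p$ to vanish outside $\bar\Omega$ before integrating by parts, a point you leave implicit. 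The Duhamel-plus-Titchmarsh core of your Steps 3--4 is also the paper's. You genuinely deviate in two places. First, after Titchmarsh the paper tests against $\phi\in C_0^\infty(\R^n\setminus\bar\Omega)^n$ and invokes Fabre--Lebeau a second time to conclude that the homogeneous Stokes solution vanishes identically, whereas you evaluate pointwise at $x_0\notin\bar\Omega$ and use holomorphy of $\tau\mapsto e^{\nu\tau\Delta}g(x_0)$ together with spatial real-analyticity of $e^{\nu\tau\Delta}g$ (the content of Proposition \ref{p1}) to propagate the vanishing and then let $\tau\to0^+$; since the relevant propagator is just the free heat semigroup on compactly supported divergence-free data, this is more elementary and self-contained. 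Second, in your Step 4 you prove that $f_1$ cannot be a pure gradient under (ii) (i.e., that its Leray projection is nonzero, via the moment identity $\int(\nabla\cdot f_1)\psi\,dx=0$ for all entire harmonics $\psi$ and the separated form of $\nabla\cdot f_1$); the paper instead replaces $f_1$ by its divergence-free part ``without loss of generality'' and then contradicts $f_1\not\equiv0$, which is a genuine contradiction only when that divergence-free part is itself nonzero --- precisely what your exclusion lemma supplies, so your treatment of this point is the more complete one. The caveats you flag yourself (justification of the Duhamel representation, and integrability near $\tau=0$ of $\tau\mapsto e^{\nu\tau\Delta}\mathbb{P}f_1(x_0)$ when $\mathbb{P}f_1$ is no longer compactly supported, which is resolved by smoothness of $\mathbb{P}f_1$ away from $\bar\Omega$ or by reverting to duality against test functions as in the paper) are standard and do not affect the argument.
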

We can extend this result to the following partial data result when $\partial\Omega$ and $\partial\mathcal O$ are analytic manifolds.

\begin{corollary}\label{c1} Let the first conditions of Theorem \ref{t1} as well as condition (ii) of Theorem \ref{t1} be fulfilled.
Assume that $\partial\Omega$ and $\partial\mathcal O$ are analytic manifolds and that $\Gamma_1$ and $\Gamma_2$ are open non-empty subsets of $\partial\Omega$ and of $\partial\mathcal O$ respectively. Then the condition \eqref{t1a} with $S=\Gamma_1$ or $S=\Gamma_2$ implies \eqref{t1b}.\end{corollary}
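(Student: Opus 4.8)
The plan is to use analyticity to upgrade the partial boundary information on $\Gamma_1$ or $\Gamma_2$ into the full boundary information required by Theorem \ref{t1}, and then to quote that theorem. Set $v=u_1-u_2$; by linearity of \eqref{eq1}, $v$ solves the Stokes system with source $F_1-F_2=\sigma_1f_1-\sigma_2f_2$ and with vanishing initial data. Since $\mathrm{supp}(f_j)$ is a compact subset of the \emph{open} set $\Omega$, the source $F_1-F_2$ vanishes on a full $\R^n$-neighborhood of $\partial\Omega$ as well as on all of $\R^n\setminus\overline\Omega\supset\overline{\mathcal O}$; hence $v$ solves the homogeneous Stokes system there. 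Using the analyticity results recalled in the appendix, I would first record that, for every fixed $t\in(0,T)$, the map $x\mapsto v(x,t)$ is real-analytic on the open set $\R^n\setminus\overline{\mathrm{supp}(f_1-f_2)}$, which contains a neighborhood of $\partial\Omega$ and a neighborhood of $\overline{\mathcal O}$.

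The core step is a reduction by the identity theorem. Fix $t\in(0,T)$. Because $\partial\Omega$ (resp. $\partial\mathcal O$) is an analytic manifold and $v(\cdot,t)$ is real-analytic on a neighborhood of it, the restriction $v(\cdot,t)|_{\partial\Omega}$ (resp. $v(\cdot,t)|_{\partial\mathcal O}$) is a real-analytic function on that analytic manifold. By the hypothesis \eqref{t1a} this restriction vanishes on the non-empty open subset $\Gamma_1$ (resp. $\Gamma_2$), so the identity theorem for real-analytic functions on a connected analytic manifold forces it to vanish on the entire manifold $\partial\Omega$ (resp. $\partial\mathcal O$). Since $t\in(0,T)$ was arbitrary, this is precisely condition \eqref{t1a} with the full surface $S=\partial\Omega$ (resp. $S=\partial\mathcal O$). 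Invoking Theorem \ref{t1} under condition (ii) then yields \eqref{t1b}, which completes the argument.

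The main obstacle is the analyticity up to and across the boundary. What the appendix supplies is interior spatial analyticity of the homogeneous Stokes flow; to restrict $v(\cdot,t)$ to $\partial\Omega$ and $\partial\mathcal O$ and apply the identity theorem, I need this analyticity to persist in a full $\R^n$-neighborhood of these analytic hypersurfaces, which is exactly why $\partial\Omega$ and $\partial\mathcal O$ are assumed analytic and why $\mathrm{supp}(f_j)$ is separated from $\partial\Omega$. A secondary point is that the identity theorem directly yields vanishing only on the connected component of the manifold meeting $\Gamma_1$ (resp. $\Gamma_2$); this gives the whole of $\partial\Omega$ and $\partial\mathcal O$ when these boundaries are connected, as in the configuration of Figure \ref{fig1}, and in the general case one propagates the vanishing to the remaining components by combining the unique continuation property for the Stokes system with the well-posedness of the homogeneous interior and exterior Stokes problems.
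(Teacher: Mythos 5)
Your reduction hinges on the claim that, for each fixed $t$, the map $x\mapsto v(x,t)=u_1(x,t)-u_2(x,t)$ is real-analytic on $\R^n\setminus\overline{\mathrm{supp}(f_1-f_2)}$, ``by the analyticity results recalled in the appendix.'' This is the gap: Proposition \ref{p1} is not an interior (local) analyticity statement, contrary to how you paraphrase it. It applies only to solutions of the Stokes system with $F\equiv 0$ on \emph{all} of $\R^n\times(0,T)$ with $u_0\in\mathbb H$, and its proof is the explicit whole-space heat-kernel representation of such solutions; this is unusable for $v$, which solves an inhomogeneous system whose source $\sigma_1f_1-\sigma_2f_2$ is nonzero. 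Nor can you localize: the Stokes system is nonlocal in space (the pressure, equivalently the Leray projection of the source, couples all of $\R^n$), so the vanishing of the source in a neighborhood of $\partial\Omega$ and on $\R^n\setminus\bar{\Omega}$ does not reduce $v$ there to a flow covered by Proposition \ref{p1}. Interior spatial analyticity for nonstationary Stokes flows in a region where the force vanishes is in fact a true theorem, but a substantive one (the pressure is harmonic in that region yet only $L^2$ in time, and one must control analyticity radii in Duhamel's formula); it is proved nowhere in the paper, so your argument cannot rest on it as stated.

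The paper orders the steps precisely so as to avoid this, and the difference is not cosmetic. It first runs the convolution argument from the proof of Theorem \ref{t1}: writing $u(\cdot,t)=\int_0^t\sigma(s)y(\cdot,t-s)ds$ with $y$ a solution of the \emph{globally} homogeneous Stokes system with initial datum $f-\nabla\psi\in\mathbb H$ (and similarly $w_1$ with datum $f_1\in\mathbb H$ in the step for the $\sigma_j$), and applying the Titchmarsh convolution theorem to the pairings $\langle u(\cdot,t),\phi\rangle$ with $\phi\in C^\infty_0(\Gamma_1)^n$, it deduces that $y$ (resp.\ $w_1$) vanishes on $\Gamma_1\times(0,T_1-\tau_3)$ (resp.\ $\Gamma_1\times(0,T-\tau_3)$). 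Only then does it invoke Proposition \ref{p1} --- legitimately, since $y$ and $w_1$ are source-free on all of $\R^n$ --- and use the identity theorem on the analytic manifold $\partial\Omega$ to upgrade vanishing on $\Gamma_1$ to vanishing on $\partial\Omega$; the conclusion then follows via \eqref{c1b} and Theorem \ref{t1} for $f_1=f_2$, and via the unique continuation result of \cite{FL} together with a contradiction argument for $\sigma_1=\sigma_2$. In short, your plan ``upgrade the data on $\Gamma_1$ to data on $\partial\Omega$, then quote Theorem \ref{t1}'' is natural, but the upgrade must be performed on the homogeneous flows produced by the Titchmarsh step, not on $v$ itself; performed on $v$, it requires a local analyticity theorem for the Stokes system that neither the appendix nor your proposal supplies. (Your closing remark on disconnected boundaries is a fair caveat, and indeed one the paper itself glosses over, but it is secondary to the gap above.)
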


Next, we state our results for source terms of the form \eqref{source2}. From now on, for any $G=(g_1,\ldots,g_{n-1})^T\in L^2(\R^{n-1}\times(0,T))^{n-1}$, we denote by $\nabla'\cdot G \in D'(\R^{n-1} \times (0,T))$ the distribution defined by 
$$\nabla'\cdot G=\sum_{j=1}^{n-1}\partial_{x_j}g_j.$$
\begin{theorem}\label{t2} Let $n\geq3$. 
 We consider two multi-layer separated variable source terms $F_j$ for $j=1,2$ given by \eqref{source2}  
\[ F_j(x',x_n,t)=h_j(x_n)(G_j(x',t)^T,0)^T,\quad (x',x_n,t)\in\R^{n-1}\times\R\times(0,T),\]
where $G_j\in L^2(\R^{n-1}\times(0,T))^{n-1}$ with $\nabla'\cdot G_j\in L^2(\R^{n-1}\times(0,T))$, $h_j\in L^2(\R)$ with $h_j\not\equiv0$. 
We assume that supp$(F_j)\subset \Omega\times[0,T]$ and consider $u_j$ the unique solution of \eqref{eq1} with $F=F_j$. 
 We assume also that there exists $\delta\in(0,T)$ such that
\bel{t2a} G_1(x',t)=G_2(x',t),\quad x'\in\R^{n-1},\ t\in(T-\delta,T).\ee
Let one of the following condition be fulfilled:\\
(i) $h_1=h_2=h$\\
(ii) $\nabla'\cdot G_1=\nabla'\cdot G_2\not\equiv0$.\\
Then, condition \eqref{t1a} implies that 
\bel{t2b} G_1(x',t)=G_2(x',t),\quad h_1(x_n)=h_2(x_n),\quad  x'\in\R^{n-1},\ x_n\in\R,\ t\in(0,T).\ee

\end{theorem}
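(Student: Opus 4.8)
The plan is to pass to the difference $u := u_1-u_2$, $p := p_1-p_2$ with source $\tilde F := F_1-F_2 = h_1(x_n)(G_1^T,0)^T - h_2(x_n)(G_2^T,0)^T$, which solves \eqref{eq1} with zero initial data and, by \eqref{t1a}, satisfies $u=0$ on $S\times(0,T)$. First I would show that the measurement forces $u$ to vanish in the whole exterior region. If $S=\partial\Omega$, an energy estimate on $\R^n\setminus\overline\Omega$ (where $\tilde F\equiv0$) with zero Dirichlet and zero initial data gives $u\equiv0$ there, the pressure term dropping out by $\nabla\cdot u=0$ and $u|_{\partial\Omega}=0$. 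If $S=\partial\mathcal O$, the same estimate on the bounded set $\mathcal O$ gives $u\equiv0$ on $\mathcal O\times(0,T)$, and the unique continuation property for the Stokes system propagates this to $u\equiv0$ on $(\R^n\setminus\overline\Omega)\times(0,T)$. In either case $u(\cdot,t)$ has vanishing Cauchy data on $\partial\Omega$, and since $\nabla p=\partial_t u-\nu\Delta u+\tilde F=0$ outside $\overline\Omega$, the pressure $p$ vanishes there too.

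Next I would reduce both hypotheses to the case $h_1=h_2$. In case (i) this is assumed. In case (ii) I would use that the pressure solves the Poisson equation $\Delta p=\nabla\cdot\tilde F=(h_1-h_2)(x_n)\,\nabla'\cdot G_1(x',t)$ (the last component of $\tilde F$ being zero), with $p$ supported in $\overline\Omega$ and with vanishing Cauchy data on $\partial\Omega$. Integrating against any function $\psi$ harmonic in $\R^n$ and applying Green's formula yields $\int_{\R^n}\nabla\cdot\tilde F(x,t)\,\psi(x)\,dx=0$ for every $t$. Testing with the explicit harmonic functions $\psi(x',x_n)=e^{i\xi'\cdot x'}e^{|\xi'|x_n}$ and freezing a time $t_0$ with $\nabla'\cdot G_1(\cdot,t_0)\not\equiv0$ factorizes this identity as the product of the two-sided Laplace transform of $h_1-h_2$ and the Fourier transform of $\nabla'\cdot G_1(\cdot,t_0)$; since $h_1-h_2$ is compactly supported its transform is entire with isolated zeros, while the Fourier transform of $\nabla'\cdot G_1(\cdot,t_0)$ is nonzero on an open set, so I conclude $h_1=h_2=:h$.

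With $h_1=h_2=h$ we have $\tilde F=h(x_n)(\tilde G^T,0)^T$ with $\tilde G:=G_1-G_2$, and \eqref{t2a} gives $\tilde F\equiv0$ on $\R^n\times(T-\delta,T)$; on this interval $u$ solves the homogeneous Stokes system and vanishes in the exterior, so unique continuation yields $u\equiv0$ there, in particular $u(\cdot,T)=0$. The final step is a duality argument with the explicit backward Stokes solutions $w(x,t)=b\,e^{\kappa\cdot x}e^{-s(T-t)}$, where $\kappa\cdot\kappa=-s/\nu$ and $b\cdot\kappa=0$ (so that $-\partial_t w-\nu\Delta w=0$ and $\nabla\cdot w=0$ with zero pressure). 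Pairing the equation for $u$ against $w$ over $\R^n\times(0,T)$ and integrating by parts — all boundary terms vanishing because $u$ and $\tilde F$ are supported in $\overline\Omega$ and $u(0)=u(T)=0$ — gives $\int_0^T\int_{\R^n}\tilde F\cdot w\,dx\,dt=0$. Choosing $\kappa=(i\eta',\kappa_n)$ with $\kappa_n^2=|\eta'|^2-s/\nu$ and using the product structure of $\tilde F$ factorizes this into $H(\kappa_n)\,P(\eta',s)=0$, where $H(\zeta)=\int_\R h(x_n)e^{\zeta x_n}\,dx_n$ and $P(\eta',s)=\int_0^T\!\!\int_{\R^{n-1}}\tilde G(x',t)e^{i\eta'\cdot x'}e^{-s(T-t)}\,dx'\,dt$.

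The crux is then a complex-analytic separation: for fixed $\eta'$, as $s$ varies $\kappa_n$ sweeps a curve on which the entire function $H$ (here $h\not\equiv0$ is essential) vanishes only at isolated points, so $P(\eta',s)=0$ for all but isolated $s$; since $s\mapsto P(\eta',s)$ is entire it vanishes identically, and injectivity of the finite-time Laplace transform forces $\int_{\R^{n-1}}\tilde G(x',t)e^{i\eta'\cdot x'}\,dx'=0$ for a.e. $t$ and every $\eta'$, whence $\tilde G\equiv0$, i.e. $G_1=G_2$. Together with $h_1=h_2$ this gives \eqref{t2b}. The main obstacle I anticipate is exactly this decoupling of the space–time behaviour: a single boundary measurement only controls $u$ at the final time, so the argument hinges on \eqref{t2a} (to secure $u(\cdot,T)=0$) together with the analytic family of exponential Stokes solutions, whose vertical transform $H$ may be divided out precisely because $h\not\equiv0$, in order to recover the full time profile of $\tilde G$.
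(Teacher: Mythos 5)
Your proposal is essentially correct, but it takes a genuinely different route from the paper in two places, and one of its steps carries more weight than your write-up acknowledges.

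What you do differently, and what it buys: first, where the paper handles the unknown data $u(\cdot,T-\delta)$ by approximate controllability on a large ball $B_R$ (building correctors $v_{k,\varepsilon}$, driven by controls supported in $B_R\setminus\bar{\Omega}$, that approximately match the exponential solutions at $t=T-\delta$ and vanish at $t=T$), you note that once $h_1=h_2$ the difference source is identically zero on $\R^n\times(T-\delta,T)$ by \eqref{t2a}, so the Fabre--Lebeau unique continuation theorem \cite{FL} applied on that strip gives $u\equiv0$ there, hence $u(\cdot,T)=0$; the duality can then be run over $(0,T)$ with no terminal term at all. This is more direct and rests on the same underlying tool, since the controllability the paper invokes is itself a consequence of \cite{FL}. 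Second, your reduction in case (ii) (freezing $t_0$ and testing the pressure equation against $e^{i\xi'\cdot x'}e^{|\xi'|x_n}$) is the paper's Step 2 in mildly different clothing, and your division argument --- for fixed $\eta'$ the zeros of $s\mapsto H(\kappa_n(s))$ are isolated while $P(\eta',\cdot)$ is entire, hence $P(\eta',\cdot)\equiv0$ --- replaces the paper's two-stage holomorphic continuation in the spatial and then temporal frequency variables. Both variants are sound.

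The step you must make explicit is the factorization ``$H(\kappa_n)\,P(\eta',s)=0$'' with $P$ \emph{vector-valued}. The duality identity only yields $H(\kappa_n)\bigl(b'\cdot P(\eta',s)\bigr)=0$ for those polarizations $b=(b',b_n)$ satisfying $b\cdot\kappa=0$. With the natural-looking choice $b=(b',0)$, $b'\cdot\eta'=0$, you control only the components of $P(\eta',s)$ orthogonal to $\eta'$; this is exactly the paper's situation with its polarizations $(\omega_k^T,0)^T$, which it resolves by separately proving $\nabla'\cdot G_1=\nabla'\cdot G_2$ (its Step 1) and then using $\nabla'\cdot\tilde G\equiv0$ to recover the missing component along $\eta'$ (along $\omega$, in the paper's notation). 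You skip that step entirely --- in case (i) you never establish the divergence identity --- so your proof stands or falls on recovering \emph{all} components of $P$ directly. It does stand, but only because the $n$-th component of $\tilde F$ vanishes: for $\kappa_n\neq0$ (i.e.\ $s\neq\nu|\eta'|^2$, harmless since you only need $s$ off a discrete set) every $b'\in\C^{n-1}$ completes to an admissible polarization via $b_n=-i(b'\cdot\eta')/\kappa_n$, and $b_n$ never enters the integral $\int\tilde F\cdot w\,dx\,dt$ because $\tilde F_n\equiv0$. Add this one observation and your argument is complete --- and in fact leaner than the paper's, since it dispenses with both the approximate controllability machinery and the paper's Step 1.
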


We emphasize that, according to the classical counter-examples in the appendix, it is in general impossible to determine sources of separated variables $F$ of the form  \eqref{source1}-\eqref{source2}. In that sense, to the best of our knowledge, Theorems \ref{t1} and \ref{t2} seem to provide the most general sufficient conditions which guarantee the determination of this class of source terms from boundary measurements. Our assumptions cover different situations where we have either some knowledge of the divergence-free part of the source $F$ or of its divergence. The novelty of our results comes from the fact that even if our measurements do not involve the pressure $p$ in \eqref{eq1}, we are still able to recover some class of sources $F$ without knowing their divergence. Compared to what was proved for other class of partial differential equations including parabolic equations \cite{KLY,KSXY}, we remark that in this paper, the obstruction to the unique determination of a source $F$ (as described in the appendix) is no longer valid and one can fully determine such class of source terms, due to the extra assumptions (i) in Theorem \ref{t1} and (ii) in Theorem \ref{t2}.  

The proofs of our main results combine arguments applicable to general classes of evolution partial differential equations with specific properties of solutions to Stokes systems. For instance, for Stokes systems, we can determine, up to a constant, the pressure $p$ appearing in \eqref{eq1} on $(\R^n\setminus\bar{\Omega})\times (0,T)$ from the knowledge of the velocity $u$ on $S\times (0,T)$. Then, using this knowledge of $p$, we can extract information about the divergence of the source $F$. Such arguments, combined with the unique continuation properties of Stokes systems, the properties of harmonic functions and the application of the Titchmarsh convolution theorem, allow to prove Theorem \ref{t1}. In order to prove Theorem \ref{t2}, we use the explicit solutions of Stokes systems as well as arguments of complex analysis and approximate controllability.

Let us now give some comments on the surface $S$ where the measurements are taken. The surface $S$ can be either the boundary of a domain $\Omega$ containing the support of the source term or the boundary of a domain $\mathcal O$ away from the support of the source (see Figure 1 for more details). The second situation can potentially have interesting applications and, to the best of our knowledge, it has not been analyzed in any previous work on inverse source problems for evolution partial differential equations in an unbounded domain. In addition, by exploiting the analyticity of the solution of Stokes systems, Corollary \ref{c1} provides an important extension of our results, in which we can restrict our measurements to an arbitrary portion of the domain $\Omega$ or $\mathcal O$, when the boundary of these domains is analytic.

Our theoretical results are complemented by a reconstruction algorithm in Section 5 and corresponding examples of numerical computation in Section 6. Our approach is based on Tikhonov regularization method that can be compared with what was already considered in other works on different classes of partial differential equations in a bounded domain (see for example \cite{JLY,JLLY}). Nevertheless, we need to adjust the algorithm and the numerical computation to Stokes system and take into account the extension of such arguments to unbounded domain. This can be done by using the fact that the knowledge of the solution $u$ of \eqref{eq1} on $\partial\Omega\times (0,T)$ implies its knowledge on $\R^n\setminus\bar{\Omega}\times (0,T)$.

\section{ Space-time separated variable sources} \label{spacetime} 
\subsection{Proof of Theorem \ref{t1}}
We consider $u_j\in L^2(0,T;\mathbb V)\cap H^1(0,T;\mathbb V')$ and $p_j\in L^2(0,T;L^2_{loc}(\R^n))$ solving the Stokes system
\begin{equation}\label{eq2}
\begin{cases}
\partial_tu_j -\nu\Delta u_j+\nabla p_j =  F_j(x,t) & \mbox{in }\R^n\times(0,T),\\
 \nabla\cdot u_j=0 & \mbox{in } \R^n\times(0,T), \\
u_j(x,0)=u_0(x) & x\in\R^n,
\end{cases}
\end{equation}
with 
$F_j(x,t)=\sigma_j(t)f_j(x)$.\par 
We recall that such solution $u_j$ is unique. We assume that condition \eqref{t1a} is fulfilled and we will prove that this condition implies \eqref{t1b}. We decompose our proof in four steps.

\textbf{Step 1.} In this step we will show that
\bel{t1c}u_1(x,t)=u_2(x,t),\quad x\in\R^n\setminus\bar{\Omega},\ t\in(0,T)\ee
and $p_j$, $j=1,2$, can be chosen in such a way that
\bel{t1d}p_1(x,t)=p_2(x,t),\quad x\in\R^n\setminus\bar{\Omega},\ t\in(0,T).\ee
We fix $u=u_1-u_2$, $p=p_1-p_2$ and we notice that $(u,p)$ satisfies the following conditions
\begin{equation}\label{eq3}
\begin{cases}
\partial_tu -\nu\Delta u+\nabla p =  F_1(x,t)-F_2(x,t) & \mbox{in }\R^n\times(0,T),\\
 \nabla\cdot u= 0 & \mbox{in } \R^n\times(0,T), \\
u(x,0)=0 & x\in \R^n,\\
u(x,t)=0 &(x,t)\in S\times(0,T).
\end{cases}
\end{equation}

$\bullet$ First case : We assume first that condition \eqref{t1a} is fulfilled with $S=\partial\Omega$. 
Fixing $v=u|_{\R^n\setminus\bar{\Omega}\times(0,T)}$ and recalling that 
$\text{supp}(F_j)\cap \R^n\setminus\bar{\Omega}\times(0,T)=\emptyset$,  
we see that $v$ solves the following conditions
\begin{equation}\label{eq4}
\begin{cases}
\partial_tv -\nu\Delta v+\nabla p =  0 & \mbox{in }\R^n\setminus\bar{\Omega}\times(0,T),\\
 \nabla\cdot v= 0 & \mbox{in } \R^n\setminus\bar{\Omega}\times(0,T), \\
v(x,0)=0, & x\in \R^n\setminus\bar{\Omega},\\
v(x,t)=0, &(x,t)\in\partial\Omega\times(0,T).
\end{cases}
\end{equation}
Notice that, since $u\in L^2(0,T;\mathbb V)$, $\nabla p\in L^2(0,T;H^{-1}(\R^n))$, $v=u|_{\R^n\setminus\bar{\Omega}\times(0,T)}\in L^2(0,T;H^1_0(\R^n\setminus\bar{\Omega}))$ and $u|_{\Omega\times(0,T)}\in L^2(0,T;H^1_0(\Omega))$,  we have
$$\int_0^t\left\langle \nabla p(\cdot,s), u(\cdot,s)\right\rangle_{H^{-1}(\R^n)^n,H^{1}(\R^n)^n}ds=0,\quad t\in(0,T)$$
which implies that, for all $t\in(0,T)$, we obtain
$$\int_0^t\left\langle \nabla p(\cdot,s)|_{\R^n\setminus\bar{\Omega}}, v(\cdot,s)\right\rangle_{H^{-1}(\R^n\setminus\bar{\Omega})^n,H^{1}_0(\R^n\setminus\bar{\Omega})^n}ds=-\int_0^t\left\langle \nabla p(\cdot,s)|_{\Omega}, u(\cdot,s)|_{\Omega}\right\rangle_{H^{-1}(\Omega)^n,H^{1}_0(\Omega)^n}ds.$$
Moreover, since $\Omega$ is a bounded open Lipschitz domain, $p\in L^2(0,T;L^2_{loc}(\R^n))$ and $u|_{\Omega\times(0,T)}\in L^2(0,T;H^1_0(\Omega)^n)$ with $\nabla\cdot u\equiv0$, applying \cite[Proposition 1.1, Chapter I]{Te} and \cite[Theorem 1.6, Chapter I]{Te}, we obtain
$$\int_0^t\left\langle \nabla p(\cdot,s)|_{\Omega}, u(\cdot,s)|_{\Omega}\right\rangle_{H^{-1}(\Omega)^n,H^{1}_0(\Omega)^n}ds=0,\quad t\in(0,T)$$
which implies that
$$\int_0^t\left\langle \nabla p(\cdot,s)|_{\R^n\setminus\bar{\Omega}},v(\cdot,s)\right\rangle_{H^{-1}(\R^n\setminus\bar{\Omega})^n,H^{1}_0(\R^n\setminus\bar{\Omega})^n}ds=0,\quad t\in(0,T).$$
Thus, taking the scalar product of \eqref{eq4} with $v$ and integrating by parts we obtain
$$\begin{aligned} &\frac{1}{2}\norm{v(\cdot,t)}_{L^2(\R^n\setminus\bar{\Omega})^n}^2+\nu \sum_{j=1}^n\int_0^t\norm{\partial_{x_j}v(\cdot,s)}_{L^2(\R^n\setminus\bar{\Omega})^n}^2ds\\
&=\int_0^t\left\langle  (\partial_sv -\nu\Delta v), v \right\rangle_{H^{-1}(\R^n\setminus\bar{\Omega})^n,H^{1}_0(\R^n\setminus\bar{\Omega})^n}ds\\
&=\int_0^t\left\langle  (\partial_sv -\nu\Delta v+\nabla p), v \right\rangle_{H^{-1}(\R^n\setminus\bar{\Omega})^n,H^{1}_0(\R^n\setminus\bar{\Omega})^n}ds=0,\quad t\in(0,T).\end{aligned}$$
It follows that $u|_{\R^n\setminus\bar{\Omega}\times(0,T)}=v\equiv0$ and condition \eqref{t1c} is fulfilled. In addition, we have 
$$\nabla p|_{\R^n\setminus\bar{\Omega}\times(0,T)}\equiv0,$$
and recalling that $\R^n\setminus\bar{\Omega}$ is connected since $\Omega$ is simply connected, we deduce that there exists $\phi\in L^2(0,T)$ such that
$$p_1(x,t)=p_2(x,t)+\phi(t),\quad x\in\R^n\setminus\bar{\Omega},\ t\in(0,T).$$
Replacing $p_1(x,t)$ by $p_1(x,t)-\phi(t)$, we may assume without loss of generality that \eqref{t1d} is fulfilled.

$\bullet$ Second case : Now let us assume that condition \eqref{t1a} is fulfilled with $S=\partial\mathcal O$. Fixing $w=u|_{\mathcal O\times(0,T)}$, we deduce that $w\in L^2(0,T;H^1(\mathcal O)^n)\cap H^1(0,T;H^{-1}(\mathcal O)^n)$ solves the following  initial boundary value problem
\begin{equation}
\begin{cases}
\partial_tw -\nu\Delta w+\nabla p =  0 & \mbox{in }\mathcal O\times(0,T),\\
 \nabla\cdot w= 0 & \mbox{in } \mathcal O\times(0,T), \\
w(x,0)=0 & x\in \mathcal O,\\
w(x,t)=0 &(x,t)\in\partial\mathcal O\times(0,T).
\end{cases}
\end{equation}
Recalling that  $\mathcal O$ is a bounded open Lipschitz domain and applying \cite[Theorem 1.6, Chapter I]{Te}, we are in position to apply the uniqueness of solutions of \eqref{eq1}, stated in \cite[Theorem 1.1, Chapter III]{Te}, to obtain $w\equiv0$. It follows that
$$u(x,t)=w(x,t)=0,\quad x\in\mathcal O,\ t\in(0,T).$$
Combining this with the fact that $\R^n\setminus\bar{\Omega}$ is connected, $\mathcal O\subset \R^n\setminus\bar{\Omega}$, $u\in L^2(0,T;H^1(\R^n\setminus\bar{\Omega}))^n$, $p\in L^2(0,T;L^2_{loc}(\R^n\setminus\bar{\Omega}))$ satisfy
$$
\begin{cases}
\partial_tu -\nu\Delta u+\nabla p=0& \mbox{in }\R^n\setminus\bar{\Omega}\times(0,T),\\
 \nabla\cdot u= 0& \mbox{in } \R^n\setminus\bar{\Omega}\times(0,T),
\end{cases}
 $$
and applying \cite[Theorem 1]{FL}, we deduce that $u|_{\R^n\setminus\bar{\Omega}\times(0,T)}\equiv0$. Thus condition \eqref{t1c} is fulfilled and, repeating the above arguments, we may assume without loss of generality that \eqref{t1d} is also fulfilled.
\par
\textbf{Step 2.} In this step, we will prove that condition (i) or (ii) of Theorem \ref{t1} implies that there exists $T_\star\in(0,T]$ such that
\bel{t1e} \nabla\cdot f_1=\nabla\cdot f_2\quad\textrm{and}\quad \sigma_1|_{(0,T_\star)}=\sigma_2|_{(0,T_\star)}\not\equiv0.\ee

$\bullet$ Case (i) :
Let us first prove this result by assuming condition (i) of Theorem \ref{t1} is fulfilled. 
Applying the divergence to the system \eqref{eq3}, for all $\phi\in C^\infty_0((0,T)\times\R^n)$, we obtain
$$
 \begin{aligned}\int_0^T\int_{\R^n}(\sigma_1(t)-\sigma_2(t))\nabla\cdot f_1(x)\phi(x,t)dxdt&=\int_0^T\int_{\R^n}(\sigma_1(t)\nabla\cdot f_1(x)-\sigma_2(t)\nabla\cdot f_2(x))\phi(x,t)dxdt\\
&=\int_0^T\int_{\R^n}(\nabla\cdot F_1(x,t)-\nabla\cdot F_2(x,t))\phi(x,t)dxdt\\
&=\left\langle \nabla\cdot(\partial_tu -\nu\Delta u+\nabla p),\phi\right\rangle_{D'((0,T)\times\R^n),C^\infty_0((0,T)\times\R^n)}\\
&=\left\langle \partial_t \nabla\cdot u -\nu\Delta \nabla\cdot u+\Delta p),\phi\right\rangle_{D'((0,T)\times\R^n),C^\infty_0((0,T)\times\R^n)}\\
&=\left\langle \Delta p,\phi\right\rangle_{D'((0,T)\times\R^n),C^\infty_0((0,T)\times\R^n)}.\end{aligned}
$$
Combining this with \eqref{t1d}, we deduce that
\bel{t1f}
\begin{cases}
\Delta p =(\sigma_1(t)-\sigma_2(t))\nabla\cdot f_1(x)&\mbox{in }\R^n\times(0,T),\\
 p=0& \mbox{in } \R^n\setminus\bar{\Omega}\times(0,T).
\end{cases}
\ee
In particular, for a.e. $t\in(0,T)$,  $p(\cdot,t)$ can be seen has an element of the set $\mathcal E'(\R^n)$ of compactly supported distributions of $\R^n$. 
In view of \eqref{t1f}, 
for any harmonic function $\psi$ on $\mathbb{R}^n$ and for 
a.e. $t\in(0,T)$, we get
$$\begin{aligned}0=\left\langle p(\cdot,t),\Delta\psi\right\rangle_{\mathcal E'(\R^n),C^\infty(\R^n)}&=\left\langle \Delta p(\cdot,t),\psi\right\rangle_{\mathcal E'(\R^n),C^\infty(\R^n)}\\
&=(\sigma_1(t)-\sigma_2(t))\int_{\R^n}\nabla\cdot f_1(x)\psi(x)dx\\
&=-(\sigma_1(t)-\sigma_2(t))\int_{\R^n}f_1(x)\cdot\nabla \psi(x)dx.\end{aligned}$$
Then, condition \eqref{t1aa} implies that 
$$\sigma_1(t)-\sigma_2(t)=0,\quad \textrm{a.e. }t\in(0,T),$$
which implies that $\sigma_1=\sigma_2$ on $(0,T)$. This proves that \eqref{t1e} is fulfilled with $T_\star=T$.

$\bullet$ Case (ii) :
Now let us assume that condition (ii) of Theorem \ref{t1} is fulfilled. 
It is clear that  condition \eqref{t1ab} is fulfilled with $\mu\equiv0$ 
since we have $\nabla\cdot f_1\equiv0\equiv\nabla\cdot f_2$ 
and condition \eqref{t1e} will be fulfilled with $T_\star=T_1$. 
So let us assume that condition \eqref{t1ab} is fulfilled with $\mu\not\equiv0$.
Then, recalling that 
$$
\nabla\cdot f_j(x)=\mu(x_n)g_j(x'),\quad j=1,2,\ x=(x',x_n)\in\R^{n-1}\times\R,
$$
we have
\bel{t0g}
\begin{cases}
\Delta p 
=\sigma(t)\mu(x_n)(g_1(x')-g_2(x'))\ & x=(x',x_n)\in\R^{n-1}\times\R,\ t\in(0,T_1),\\
 p= 0 & \mbox{in } \R^n\setminus\bar{\Omega}\times(0,T).
 \end{cases}
 \ee
We fix $\tau\in\R$, $\rho\in\R$, $\omega\in\mathbb S^{n-2}$ and we can check that  
$$\Delta(e^{-i\rho x'\cdot\omega-\rho x_n})=0,\quad x'\in\R^{n-1},\ x_n\in\R.$$
Then equation \eqref{t0g} and Fubini's theorem implies
\bel{t1g}\begin{aligned}0&=\left\langle p,e^{-it\tau}\Delta(e^{-i\rho x'\cdot\omega-\rho x_n})\right\rangle_{L^2(0,T;\mathcal E'(\R^n)),L^2(0,T;C^\infty(\R^n))}\\
&=\left\langle \Delta p(\cdot,t),e^{-it\tau}e^{-i\rho x'\cdot\omega-\rho x_n}\right\rangle_{L^2(0,T;\mathcal E'(\R^n)),L^2(0,T;C^\infty(\R^n))}\\
&=\int_0^T\int_\R\int_{\R^{n-1}} \sigma(t)\mu(x_n)(g_1(x')-g_2(x'))e^{-it\tau}e^{-i\rho x'\cdot\omega-\rho x_n}dx'dx_ndt\\
&=\left(\int_0^T\sigma(t)e^{-it\tau}dt\right)\left(\int_\R\mu(x_n)e^{-\rho x_n}dx_n\right)\left(\int_{\R^{n-1}}(g_1(x')-g_2(x'))e^{-i\rho x'\cdot\omega}dx'\right).\end{aligned}\ee
Recalling that $\sigma\not\equiv0$, we can find $\tau\in\R$ such that
$$\int_0^T\sigma(t)e^{-it\tau}dt\neq0.$$
Moreover, the fact that $f_1$ is compactly supported implies that  $\mu\in L^2(\R)$ is compactly supported. Thus, the map
$$\mathbb C\ni z\mapsto \int_\R\mu(x_n)e^{-z x_n}dx_n$$
is holomorphic in $\mathbb C$ and, since $\mu\not\equiv0$, we may find $\rho_1,\rho_2\in\R$, $\rho_1<\rho_2$ such that
$$\int_\R\mu(x_n)e^{-\rho x_n}dx_n\neq0,\quad \rho\in(\rho_1,\rho_2).$$
Combining this with \eqref{t1g}, we deduce that
$$\int_{\R^{n-1}}(g_1(x')-g_2(x'))e^{-i\rho x'\cdot\omega}dx'=0,\quad \rho\in(\rho_1,\rho_2),\ \omega\in\mathbb S^{n-2}.$$
On the other hand, the fact that $f_j$ is compactly supported implies that $g_j\in L^2(\R^{n-1})$ is compactly supported. Thus, the map 
$$\mathbb C\ni z\mapsto \int_{\R^{n-1}}(g_1(x')-g_2(x'))e^{-iz x'\cdot\omega}dx'$$
is holomorphic in $\mathbb C$ and the unique continuation for holomorphic functions implies that
$$\int_{\R^{n-1}}(g_1(x')-g_2(x'))e^{-i\rho x'\cdot\omega}dx'=0,\quad \rho\geq0,\ \omega\in\mathbb S^{n-2}.$$
Then, from the injectivity of the Fourier transform we deduce that $g_1=g_2$ which implies that \eqref{t1e} is fulfilled with $T_\star=T_1$.\par 
\textbf{Step 3.} In this step we will prove that $f_1=f_2$. For this purpose, applying \eqref{t1c}, \eqref{eq3} and \eqref{t1e}, we deduce that, for  $u=u_1-u_2$, $p=p_1-p_2$,   $(u,p)$ satisfies the following conditions
\begin{equation}\label{eq5}
\begin{cases}
\partial_tu -\nu\Delta u+\nabla p =  \sigma_1(t)f(x) & \mbox{in }\R^n\times(0,T_\star),\\
 \nabla\cdot u= 0 & \mbox{in } \R^n\times(0,T_\star), \\
u(x,0)=0 & x\in \R^n,\\
u(x,t)=0 &(x,t)\in\R^n\setminus\bar{\Omega}\times(0,T).
\end{cases}
\end{equation}
with $f=f_1-f_2$ and $\nabla\cdot f=0$. Therefore, $f\in\mathbb H$ and we can fix $y\in  L^2(0,T;\mathbb V)\cap H^1(0,T;\mathbb V')$ and $q\in L^2(0,T;L^2_{loc}(\R^n))$ solving the system 
\begin{equation}\label{eq6}
\begin{cases}
\partial_ty -\nu\Delta y+\nabla q =  0 & \mbox{in }\R^n\times(0,T),\\
 \nabla\cdot y= 0 & \mbox{in } \R^n\times(0,T), \\
y(x,0)=f(x) & x\in \R^n.
\end{cases}
\end{equation}
In light of \eqref{eq5}, one can easily check that
$$u(\cdot,t)=\int_0^t\sigma_1(s)y(\cdot,t-s)ds,\quad t\in (0,T_\star)$$
and $p$ can be chosen in such a way that
$$p(\cdot,t)=\int_0^t\sigma_1(s)q(\cdot,t-s)ds,\quad t\in (0,T_\star).$$
Fix $\phi\in C^\infty_0(\R^n\setminus\bar{\Omega})^n$. 
In light of \eqref{t1c}, $u$ is supported in $\Omega$ and we find
$$\int_0^t\sigma_1(s)\left\langle y(\cdot,t-s),\phi\right\rangle_{L^2(\R^n)^n}ds=\left\langle u(\cdot,t),\phi\right\rangle_{L^2(\R^n)^n}=0,\quad t\in(0,T_\star).$$
Applying the Titchmarsh convolution theorem (see
\cite[Theorem VII]{Ti}) we deduce that there exist $\tau_1,\tau_2\in[0,T_\star]$
satisfying $\tau_1+\tau_2\geq T_\star$ such that
$$\sigma_1(t)=0,\quad t\in(0,\tau_1),$$
$$\left\langle y(\cdot,t),\phi\right\rangle_{L^2(\R^n)^n}=0,\quad t\in(0,\tau_2).$$
Since $\sigma_1|_{(0,T_\star)}\not\equiv0$, we have 
$$\tau_1\leq \tau_3:=\sup\{t\in(0,T_\star):\ \sigma_1|_{(0,t)}\equiv0\} <T_\star$$ 
and it follows that $\tau_2\geq T_\star-\tau_1\geq T-\tau_3>0$. Therefore, we have
$$\left\langle y(\cdot,t),\phi\right\rangle_{L^2(\R^n)^n}=0,\quad t\in(0,T_\star-\tau_3)$$
and, recalling that $\tau_3$ depends only on $\sigma_1$ we deduce that the above identity holds true for any $\phi\in C^\infty_0(\R^n\setminus\bar{\Omega})^n$. Thus, we have
$$y(x,t)=0,\quad x\in \R^n\setminus\bar{\Omega},\quad t\in(0,T_\star-\tau_3),
$$
and combining this with \eqref{eq6} and the unique continuation result of \cite[Theorem 1]{FL}, we deduce that $y=0$ on $\R^n\times(0,T_\star-\tau_3)$. In addition, recalling that $y\in C([0,T_\star];L^2(\R^n)^n)$ (see e.g. \cite[Theorem 1.1, Chapter III]{Te}), we deduce that $f=y(\cdot,0)=0$ in $\R^n$ which implies that $f_1=f_2$.\par 
\textbf{Step 4.} In this step we will complete the proof of Theorem \ref{t1} by showing by contradiction that $\sigma_1=\sigma_2$. For this purpose, let us assume the contrary. We recall that according to Step 3 we have $f_1=f_2$. In light of \cite[Remark 1.5, Chapter I]{Te}, we can find $\psi\in L^2_{loc}(\R^n)$ such that $f_1-\nabla\psi\in\mathbb H$.
By replacing in \eqref{eq1} with $F=F_j$, $j=1,2$,  $p_j$ with $p_j-\sigma_j(t)\psi(x)$, we may assume without loss of generality that $f_1\in \mathbb H$. Then, repeating the arguments of Step 3, we deduce that
$$u(\cdot,t)=\int_0^t(\sigma_1(s)-\sigma_2(s))w_1(\cdot,t-s)ds,\quad t\in (0,T)$$
with $w_1\in  L^2(0,T;\mathbb V)\cap H^1(0,T;\mathbb V')$ and $q\in L^2(0,T;L^2_{loc}(\R^n))$ solving the system 
\begin{equation}\label{eq7}
\begin{cases}
\partial_tw_1 -\nu\Delta w_1+\nabla q =  0 & \mbox{in }\R^n\times(0,T),\\
 \nabla\cdot w_1= 0 & \mbox{in } \R^n\times(0,T), \\
w_1(x,0)=f_1(x) & x\in \R^n.
\end{cases}
\end{equation}
Fixing $\phi\in C^\infty_0(\R^n\setminus\bar{\Omega})^n$ and applying \eqref{t1c}, 
$u$ is supported in $\Omega$ and we find
$$\int_0^t(\sigma_1(s)-\sigma_2(s))\left\langle w_1(\cdot,t-s),\phi\right\rangle_{L^2(\R^n)^n}ds=\left\langle u(\cdot,t),\phi\right\rangle_{L^2(\R^n)^n}=0,\quad t\in(0,T).$$
Applying the Titchmarsh convolution theorem,  we can find $\tau_1,\tau_2\in[0,T]$
satisfying $\tau_1+\tau_2\geq T$ such that
$$\sigma_1(t)-\sigma_2(t)=0,\quad t\in(0,\tau_1),$$
$$\left\langle w_1(\cdot,t),\phi\right\rangle_{L^2(\R^n)^n}=0,\quad t\in(0,\tau_2).$$
Since $\sigma_1\neq\sigma_2$, we have $$\tau_1\leq \tau_3:=\sup\{t\in(0,T):\ \sigma_1-\sigma_2|_{(0,t)}\equiv0\}<T$$ and it follows that $\tau_2\geq T-\tau_1\geq T-\tau_3>0$. Therefore, we have
$$
\left\langle w_1(\cdot,t),\phi\right\rangle_{L^2(\R^n)^n}=0,\quad t\in(0,T-\tau_3)
$$
and recalling that $\tau_3$ is independent of $\phi$ we deduce that the above identity holds true for any $\phi\in C^\infty_0(\R^n\setminus\bar{\Omega})^n$. Thus, we have
$$
w_1(x,t)=0,\quad x\in \R^n\setminus\bar{\Omega},\quad t\in(0,T-\tau_3)
$$
and combining this with \eqref{eq7} and the unique continuation result of \cite[Theorem 1]{FL}, we deduce that $w_1=0$ on $\R^n\times(0,T-\tau_3)$. Thus, we have $f_1=w_1(\cdot,0)=0$ in $\R^n$ which contradicts the fact that $f_1\not\equiv0$. 
Therefore, we have $\sigma_1=\sigma_2$ and \eqref{t1b} is fulfilled. This completes the proof of Theorem \ref{t1}.\par 
\subsection{Proof of  Corollary \ref{c1}}
This subsection is devoted to the proof of Corollary \ref{c1}. In all this subsection, we assume that condition \eqref{t1a} with $S=\Gamma_1$ or $S=\Gamma_2$ is fulfilled and we will show that this condition implies that $f_1=f_2$ and $\sigma_1=\sigma_2$. 
Without loss of generality we assume that condition \eqref{t1a} is fulfilled with $S=\Gamma_1 \subset \partial\Omega$, 
the case $S=\Gamma_2 \subset \partial O$ can be treated in a similar fashion. 
We divide the proof  into two steps. 
Namely, in the first step we prove that $f_1=f_2$ while in the second step we show that $\sigma_1=\sigma_2$.\par 
\textbf{Step 1.} In this step we will prove that $f_1=f_2$. Following the proof of Theorem \ref{t1}, we only need to show that the condition 
\bel{c1a} u_1(x,t)=u_2(x,t),\quad x\in \partial\Omega,\ t\in(0,T_1),\ee
with $T_1$ given by condition (ii) of Theorem \ref{t1}, is fulfilled.  For this purpose, we fix $u=u_1-u_2$,  $f=f_1-f_2$ and we define $\psi\in L^2_{loc}(\R^n)$ such that $f-\nabla\psi\in\mathbb H$. Choosing $p=p_1-p_2-\sigma(t)\psi$ and applying condition (ii) of Theorem \ref{t1}, we notice that $u$ solves the problem
\begin{equation}\label{eq3b}
\begin{cases}
\partial_tu -\nu\Delta u+\nabla p = \sigma(t)(f(x)-\nabla\psi(x))& \mbox{in }\R^n\times(0,T_1),\\
 \nabla\cdot u= 0 & \mbox{in } \R^n\times(0,T_1), \\
u(x,0)=0 & x\in\R^n,\\
u(x,t)=0 & (x,t)\in \Gamma_1\times(0,T_1).  
\end{cases}
\end{equation}
Then,  in a similar way to Theorem \ref{t1}, we can prove that $u$ takes the form
\bel{c1b}u(\cdot,t)=\int_0^t\sigma(s)y(\cdot,t-s)ds,\quad t\in (0,T_1),\ee
with $y$ solving the problem
\begin{equation}\label{eqqq7}
\begin{cases}
\partial_ty -\nu\Delta y+\nabla q =  0 & \mbox{in }\R^n\times(0,\infty),\\
 \nabla\cdot y= 0 & \mbox{in } \R^n\times(0,+\infty), \\
y(x,0)=f(x)-\nabla\psi(x) & x\in \R^n.
\end{cases}
\end{equation}
 We fix $\phi\in C^{\infty}_0(\Gamma_1)^n$ and we notice that
$$\int_0^t\sigma(s)\left\langle y(\cdot,t-s),\phi\right\rangle_{L^2(\partial\Omega)^n}ds=\left\langle u(\cdot,t),\phi\right\rangle_{L^2(\partial\Omega)^n}=0,\quad t\in (0,T_1).$$
Applying the Titchmarsh convolution theorem (see
\cite[Theorem VII]{Ti}) we deduce that there exist $\tau_1,\tau_2\in[0,T_1]$
satisfying $\tau_1+\tau_2\geq T_1$ such that
$$\sigma(t)=0,\quad t\in(0,\tau_1),$$
$$\left\langle y(\cdot,t),\phi\right\rangle_{L^2(\partial\Omega)^n}=0,\quad t\in(0,\tau_2).$$
Since $\sigma|_{(0,T_1)}\not\equiv0$, we have 
$$\tau_1\leq \tau_3:=\sup\{t\in(0,T_1):\ \sigma|_{(0,t)}\equiv0\} <T_1$$ 
and it follows that $\tau_2\geq T_1-\tau_1\geq T_1-\tau_3>0$. Therefore, we have
$$\left\langle y(\cdot,t),\phi\right\rangle_{L^2(\partial\Omega)^n}=0,\quad t\in(0,T_1-\tau_3)$$
and, recalling that $\tau_3$ depends only on $\sigma$ we deduce that the above identity holds true for any $\phi\in C^{\infty}_0(\Gamma_1)^n$. Thus, we have
\bel{c1c}y(x,t)=0,\quad x\in \Gamma_1,\quad t\in(0,T_1-\tau_3).\ee
In addition, according to Proposition \ref{p1} of the Appendix, for all $t\in(0,T_1-\tau_3)$,  the map $\R^n\ni x\mapsto y(x,t)$ is real analytic. Combining this with the fact that $\partial\Omega$ is an analytic manifold, \eqref{c1c} and the unique continuation of real analytic functions implies that 
$$
y(x,t)=0,\quad x\in \partial\Omega,\quad t\in(0,T_1-\tau_3).
$$
Then, using \eqref{c1b} we deduce that \eqref{c1a} is fulfilled and Theorem \ref{t1} implies that $f_1=f_2$.\par
\textbf{Step 2.} In this step we will prove by contradiction that $\sigma_1=\sigma_2$. For this purpose, let us assume the contrary.  We recall that according to Step 1 we have $f_1=f_2\not\equiv0$ and, in a similar way to Step 4 of Theorem \ref{t1},  we may assume without loss of generality that $f_1\in \mathbb H$. Then, repeating the arguments of Theorem \ref{t1}, we deduce that
$$u(\cdot,t)=\int_0^t(\sigma_1(s)-\sigma_2(s))w_1(\cdot,t-s)ds,\quad t\in (0,T),$$
with $w_1\in  L^2(0,T;\mathbb V)\cap H^1(0,T;\mathbb V')$ and $q\in L^2(0,T;L^2_{loc}(\R^n))$ solving the system \eqref{eq7}.
Fixing $\phi\in C^{\infty}_0(\Gamma_1)^n$, we find
$$\int_0^t(\sigma_1(s)-\sigma_2(s))\left\langle w_1(\cdot,t-s),\phi\right\rangle_{L^2(\partial\Omega)^n}ds=\left\langle u(\cdot,t),\phi\right\rangle_{L^2(\partial\Omega)^n}=0,\quad t\in(0,T).$$
Applying the Titchmarsh convolution theorem,  we can find $\tau_1,\tau_2\in[0,T]$
satisfying $\tau_1+\tau_2\geq T$ such that
$$\sigma_1(t)-\sigma_2(t)=0,\quad t\in(0,\tau_1),$$
$$\left\langle w_1(\cdot,t),\phi\right\rangle_{L^2(\partial\Omega)^n}=0,\quad t\in(0,\tau_2).$$
Since $\sigma_1\neq\sigma_2$, we have $$\tau_1\leq \tau_3:=\sup\{t\in(0,T):\ \sigma_1-\sigma_2|_{(0,t)}\equiv0\}<T$$ and it follows that $\tau_2\geq T-\tau_1\geq T-\tau_3>0$. Therefore, we have
$$\left\langle w_1(\cdot,t),\phi\right\rangle_{L^2(\partial\Omega)^n}=0,\quad t\in(0,T-\tau_3)$$
and, recalling that $\tau_3$ is independent of $\phi$ we deduce that the above identity holds true for any $\phi\in C^{\infty}_0(\Gamma_1)^n$. Thus, we have
$$w_1(x,t)=0,\quad x\in \Gamma_1,\quad t\in(0,T-\tau_3).$$
Moreover, applying Proposition \ref{p1} of the Appendix we deduce that, for all $t\in(0,T-\tau_3)$,  the map $\R^n\ni x\mapsto w_1(x,t)$ is analytic. Combining this with the fact that $\partial\Omega$ is an analytic manifold,  the unique continuation of real analytic functions implies that 
\bel{c1d}w_1(x,t)=0,\quad x\in \partial\Omega,\quad t\in(0,T-\tau_3).\ee
Combining this with \eqref{eq7} and the arguments of Step 1 of Theorem \ref{t1}, we deduce that \eqref{c1d} implies that
$$w_1(x,t)=0,\quad x\in \R^n\setminus\bar{\Omega},\quad t\in(0,T-\tau_3)$$
and, combining this with \eqref{eq7} and the unique continuation result of \cite[Theorem 1]{FL}, we deduce that $w_1=0$ on $\R^n\times(0,T-\tau_3)$. Thus, we have $f_1=w_1(\cdot,0)=0$ in $\R^n$ which contradicts the fact that $f_1\not\equiv0$. Therefore, we have $\sigma_1=\sigma_2$ and \eqref{t1b} is fulfilled. This completes the proof of Corollary \ref{c1}.

\section{ Multi-layer separated variable sources} \label{multilayer}
We assume that condition \eqref{t1a} is fulfilled and we will prove that this condition implies \eqref{t2b}. In a similar way to Step 1 of Theorem \ref{t1}, we can see that $u=u_1-u_2$, $p=p_1-p_2$ solve \eqref{eq3}, condition \eqref{t1c} is fulfilled and $p_j$, $j=1,2$, can be chosen in such a way that \eqref{t1d} is fulfilled. Using these properties we will decompose the proof of Theorem \ref{t2} into three steps.

\textbf{Step 1.} We start by proving that if condition (i) of Theorem \ref{t2} is fulfilled, we have $\nabla'\cdot G_1=\nabla'\cdot G_2$. In this step, we fix $h=h_1=h_2$. For this purpose, in a similar way to Step 2 of Theorem \ref{t1} applying the divergence to the system \eqref{eq3} and using \eqref{t1c} and \eqref{t1d}, we deduce that
\bel{t2c}\begin{cases}\Delta p =(\nabla'\cdot G_1(x',t)-\nabla'\cdot G_2(x',t))h(x_n)& \mbox{in }\R^n\times(0,T),\\
 p= 0 & \mbox{in } \R^n\setminus\bar{\Omega}\times(0,T).\end{cases}
 .\ee
Fix $\tau\in\R$, $\rho\in\R$, $\omega\in\mathbb S^{n-2}$. Multiplying \eqref{t2c} by the map
$$e^{-it\tau}e^{-i\rho\omega\cdot x'-\rho x_n},\quad t\in(0,T),\ x'\in\R^n,\ x_n\in\R,$$
 integrating by parts and applying Fubini theorem, we find
$$\begin{aligned}&\left(\int_0^T\int_{\R^{n-1}}(\nabla'\cdot G_1(x',t)-\nabla'\cdot G_2(x',t))e^{-i\rho\omega\cdot x'}e^{-it\tau}dx'dt\right)\left(\int_\R h(x_n)e^{-\rho x_n}dx_n\right)\\
&=\int_0^T\int_{\R^{n-1}}\int_\R(\nabla'\cdot G_1(x',t)-\nabla'\cdot G_2(x',t))h(x_n)e^{-i\rho\omega\cdot x'-\rho x_n}e^{-it\tau}dx'dx_ndt\\
&=\left\langle \Delta p(\cdot,t),e^{-it\tau}e^{-i\rho x'\cdot\omega-\rho x_n}\right\rangle_{L^2(0,T;\mathcal E'(\R^n)),L^2(0,T;C^\infty(\R^n))}\\
&=\left\langle p,e^{-it\tau}\Delta(e^{-i\rho x'\cdot\omega-\rho x_n})\right\rangle_{L^2(0,T;\mathcal E'(\R^n)),L^2(0,T;C^\infty(\R^n))}=0.\end{aligned}$$
Since $F_1$ is compactly supported, we deduce that $h\in L^2(\R)$ is also compactly supported and
the map
$$\mathbb C\ni z\mapsto \int_\R h(x_n)e^{-z x_n}dx_n$$
is holomorphic in $\mathbb C$. Then the injectivity of the Fourier transform combined with the fact that $h\not\equiv 0$ imply that there exist $0<\rho_1<\rho_2$, such that
$$ \int_\R h(x_n)e^{-\rho x_n}dx_n\neq0,\quad \rho\in (\rho_1,\rho_2).$$
Thus, fixing $\tilde{g}=\nabla'\cdot G_1-\nabla'\cdot G_2$ extended by $0$ to $\R^{n-1}\times\R$, we obtain
\bel{t2d}\int_R\int_{\R^{n-1}}\tilde{g}(x',t)e^{-i\rho\omega\cdot x'}e^{-it\tau}dx'dt=0,\quad \tau\in\R,\ \rho\in (\rho_1,\rho_2),\ \omega\in\mathbb S^{n-2}.\ee
In addition, using the fact that $\tilde{g}$ is compactly supported in $\R^{n-1}\times\R$, we deduce that for all $\tau\in\R$ and all $\omega\in\mathbb S^{n-2}$, the map
$$\mathbb C\ni z\mapsto \int_R\int_{\R^{n-1}}\tilde{g}(x',t)e^{-iz\omega\cdot x'}e^{-it\tau}dx'dt$$
is holomorphic in $\mathbb C$. Thus, by unique continuation of holomorphic functions, \eqref{t2d} implies
$$\int_R\int_{\R^{n-1}}\tilde{g}(x',t)e^{-i\rho\omega\cdot x'}e^{-it\tau}dx'dt=0,\quad \tau\in\R,\ \rho>0,\ \omega\in\mathbb S^{n-2}$$
and from the injectivity of the Fourier transform, we deduce that $\tilde{g}\equiv0$. It follows that $\nabla'\cdot G_1=\nabla'\cdot G_2$.
\par
\textbf{Step 2.} In this step we show that if condition (ii) of Theorem \ref{t2} is fulfilled, we have $h_1=h_2$. Repeating the above arguments, applying the divergence to the system \eqref{eq3} and using \eqref{t1c} and \eqref{t1d}, we deduce that
\bel{t22c}\begin{cases}\Delta p =\nabla'\cdot G_1(x',t)(h_1(x_n)-h_2(x_n))& \mbox{in }\R^n\times(0,T),\\
 p= 0 & \mbox{in } \R^n\setminus\bar{\Omega}\times(0,T)\end{cases}.
 \ee
Then, fixing $\tau\in\R$, $\rho>0$ and integrating by parts we obtain
$$\begin{aligned}&\left(\int_0^T\int_{\R^{n-1}}\nabla'\cdot G_1(x',t)e^{-\rho\omega\cdot x'}e^{-it\tau}dx'dt\right)\left(\int_\R (h_1(x_n)-h_2(x_n))e^{-i\rho x_n}dx_n\right)\\
&=\int_0^T\int_{\R^{n-1}}\int_\R \nabla'\cdot G_1(x',t)(h_1(x_n)-h_2(x_n))e^{-\rho\omega\cdot x'-i\rho x_n}e^{-it\tau}dx'dx_ndt\\
&=\left\langle \Delta p(\cdot,t),e^{-it\tau}e^{-\rho x'\cdot\omega-i\rho x_n}\right\rangle_{L^2(0,T;\mathcal E'(\R^n)),L^2(0,T;C^\infty(\R^n))}\\
&=\left\langle p,e^{-it\tau}\Delta(e^{-\rho x'\cdot\omega-i\rho x_n})\right\rangle_{L^2(0,T;\mathcal E'(\R^n)),L^2(0,T;C^\infty(\R^n))}=0.\end{aligned}$$
Since $\nabla'\cdot G_1\not\equiv0$, as a consequence of the injectivity of the Fourier transform, we can find $\tau\in\R$ such that the map
$$g_\tau: x'\mapsto\int_0^T\nabla'\cdot G_1(x',t)e^{-it\tau}dt$$
is not uniformly vanishing. Moreover, since $F_1$ is compactly supported,  $G_1$ will be compactly supported and the same will be true for the map $g_\tau$. Thus, the map
$$z\mapsto \int_{\R^{n-1}}g_\tau(x') e^{-z\omega\cdot x'}dx'$$
with be holomorphic in $\mathbb C$ and non-uniformely vanishing since $g_\tau\not\equiv0$. Therefore, we can find $0<\rho_1<\rho_2$, such that, for all $\rho\in(\rho_1,\rho_2)$, we get
$$\int_0^T\int_{\R^{n-1}}\nabla'\cdot G_1(x',t)e^{-\rho\omega\cdot x'}e^{-it\tau}dx'dt=\int_{\R^{n-1}}g_\tau(x') e^{-\rho\omega\cdot x'}dx'\neq 0.$$
It follows that
$$\int_\R (h_1(x_n)-h_2(x_n))e^{-i\rho x_n}dx_n=0, \quad \rho\in(\rho_1,\rho_2)$$
and, combining as above the fact that $h_1-h_2$ is compactly supported with the injectivity of the Fourier transform, we deduce that this condition implies $h_1-h_2\equiv0$ and $h_1=h_2$.\\
\par
\textbf{Step 3.} In this step we will complete the proof of Theorem \ref{t2}.  From now on, we fix $h_1=h_2=h$ and we denote by $G$ the extension of $G_1-G_2$ by zero to $\R^{n-1}\times\R$. Notice also that according to Step 1, we have
\bel{t2e}\nabla'\cdot G(x',t)=0,\quad x'\in\R^{n-1},\ t\in\R.\ee
Fix $\tau>0$, $\lambda\in(0,1)$, $\omega\in\mathbb S^{n-2}$ and $\omega_j\in\mathbb S^{n-2}$, $j=1,\ldots,n-2$, such that $\{\omega,\omega_1,\ldots,\omega_{n-2}\}$ is an orthonormal basis of $\R^{n-1}$. For $k=1,\ldots,n-2$, we consider the map
\bel{t2f}w_k(x',x_n,t)=\exp\left(-\nu\tau^2t-\tau\sqrt{1-\lambda^2}x'\cdot\omega-\tau\lambda x_n\right) (\omega_k^T,0)^T,\quad (x',x_n,t)\in \R^{n-1}\times\R\times\R\ee
and one can easily check that $w_k\in C^\infty(\R^n\times\R)^n$ satisfies the following conditions
\begin{equation}\label{t2g}
\begin{cases}
-\partial_tw_k -\nu\Delta w_k=  0& \mbox{in }\R^n\times(0,T-\delta),\\
 \nabla\cdot w_k= 0 & \mbox{in } \R^n\times(0,T-\delta).
\end{cases}
\end{equation}
Now let us fix $B_R$ the open ball of center $0$ and of radius $R>0$ with $R$ sufficiently large such that $\bar{\Omega}\subset B_R$ and consider $U$ an open non-empty set of $\R^n$ contained in $B_R\setminus\bar{\Omega}$. We set also the sets
$$\Upsilon(B_R):=\{f\in C^\infty_0(B_R)^n:\ \nabla\cdot f\equiv0\},$$
 $\mathbb V(B_R)$ the closure of $\Upsilon(B_R)$ in $H^1(B_R)^n$ and $\mathbb V(B_R)'$ the dual space of  $\mathbb V(B_R)$.
It is well known that the unique continuation result of \cite[Theorem 1]{FL} implies the approximate controllability of the Stokes system on $B_R$ from the set $U$ (see e.g. \cite{Fc} for more details).  This allows us to show that for any $\varepsilon>0$ there exists $H_{k,\varepsilon}\in L^2(B_R\times(T-\delta,T))$ satisfying supp$(H_{k,\varepsilon})\subset U\times [T-\delta,T]$ such that the solution $v_{k,\varepsilon}\in L^2(T-\delta,T;\mathbb V(B_R))\cap H^1(T-\delta,T;\mathbb V(B_R)')$ of the system
\begin{equation}\label{t2h}
\begin{cases}
-\partial_t v_{k,\varepsilon} -\nu\Delta v_{k,\varepsilon}+\nabla q_{k,\varepsilon}=  H_{k,\varepsilon}& \mbox{in }B_R\times(T-\delta,T),\\
 \nabla\cdot v_{k,\varepsilon}= 0 & \mbox{in } B_R\times(T-\delta,T),\\
v_{k,\varepsilon}(x,t)=0& (x,t)\in\partial B_R\times (T-\delta,T),\\
v_{k,\varepsilon}(x,T)=0& x\in B_R,
\end{cases}
\end{equation}
satisfies the estimate
\bel{t2j}\norm{v_{k,\varepsilon}(\cdot,T-\delta)-w_k(\cdot,T-\delta)|_{B_R}}_{L^2(B_R)^n}\leq \varepsilon.\ee
Note that here we have used the fact that $w_k(\cdot,T-\delta)|_{B_R}\in L^2(B_R)^n$ and $\nabla\cdot w_k(\cdot,T-\delta)\equiv0$.
We fix $k\in\{1,\ldots,n-2\}$, $\varepsilon>0$ and we set $y_{k,\varepsilon}\in L^2(0,T;H^1(B_R))^n$ defined on $B_R\times(0,T) $ by 
$$y_{k,\varepsilon}(\cdot,t)=\left\{\begin{array}{ll} w_k(\cdot,t)&\textrm{ if }t\in(0,T-\delta),\\
v_{k,\varepsilon}(\cdot,t)&\textrm{ if }t\in(T-\delta,T).\end{array}\right.$$
In light of \eqref{t1c} and \cite[Theorem 1.6, Chapter I]{Te}, we find $u|_{B_R\times(T-\delta,T)}\in L^2(T-\delta,T;\mathbb V(B_R))$.
Combining this with \eqref{t2a}, \eqref{t1c}, \eqref{t1d},  \eqref{t2g}, we obtain
$$ \begin{aligned}&\int_0^{T-\delta}\int_{\R^n}(F_1-F_2)\cdot w_kdxdt\\
&=\int_0^T\int_{B_R}(F_1-F_2)\cdot y_{k,\varepsilon}dx dt\\
&=\int_0^{T-\delta}\left\langle \partial_tu-\nu\Delta u+\nabla p, w_k\right\rangle_{\mathcal E'(\R^n)^n, C^\infty(\R^n)^n} dt+\int_{T-\delta}^T\left\langle \partial_tu-\nu\Delta u+\nabla p), v_{k,\varepsilon}\right\rangle_{\mathbb V(B_R)',\mathbb V(B_R)} dt\\
&=\int_0^{T-\delta}\left\langle u, (-\partial_tw_k -\nu\Delta w_k)\right\rangle_{\mathcal E'(\R^n)^n, C^\infty(\R^n)^n} dt+\int_{T-\delta}^T\left\langle  (-\partial_tv_{k,\varepsilon} -\nu\Delta v_{k,\varepsilon}+\nabla q_{k,\varepsilon}),u\right\rangle_{\mathbb V(B_R)',\mathbb V(B_R)}dt\\
&\ \ \ +\int_{B_R} u(x,T-\delta)\cdot(w_k(x,T-\delta)-v_{k,\varepsilon}(x,T-\delta))dx. 
\end{aligned}$$
Taking in account that 
\[ \text{supp}(H_{k,\varepsilon})\subset U\times [T-\delta,T]\subset \R^n\setminus\bar{\Omega}\times[0,T],\] 
the supports of $u$ and $H_{k,\varepsilon}$ are disjoint and we get
$$
\begin{aligned}
&\int_0^{T-\delta}\int_{\R^n}(F_1-F_2)\cdot w_kdxdt\\
&=\int_{T-\delta}^T\int_{B_R}u\cdot H_{k,\varepsilon}dx dt+\int_{B_R} u(x,T-\delta)\cdot(w_k(x,T-\delta)-v_{k,\varepsilon}(x,T-\delta))dx\\
&=\int_{B_R} u(x,T-\delta)\cdot(w_k(x,T-\delta)-v_{k,\varepsilon}(x,T-\delta))dx.\end{aligned}$$
Applying Cauchy-Schwarz inequality and \eqref{t2j}, we get 
$$\begin{aligned}\abs{\int_0^{T-\delta}\int_{\R^n}(F_1-F_2)\cdot w_kdxdt}&\leq \norm{u(\cdot,T-\delta)}_{L^2(B_R)^n}\norm{v_{k,\varepsilon}(\cdot,T-\delta)-w_k(\cdot,T-\delta)}_{L^2(B_R)^n}\\
&\leq \norm{u(\cdot,T-\delta)}_{L^2(B_R)^n}\varepsilon.\end{aligned}$$
Sending $\varepsilon\to0$, we find
$$\int_0^{T-\delta}\int_{\R^n}(F_1-F_2)\cdot w_kdxdt=0,$$
which combined with \eqref{t2a} implies that, for all $\tau>0$, $\lambda\in(0,1)$, we have
\bel{t2k}\begin{aligned}&\left(\int_\R\int_{\R^{n-1}}G(x',t)\cdot\omega_k e^{-\nu\tau^2 t}e^{\tau\sqrt{1-\lambda^2}x'\cdot\omega}dx'dt\right)
\left(\int_\R h(x_n)e^{-\tau\lambda x_n}dx_n\right)\\
&=\int_0^{T-\delta}\int_{\R^n}(F_1-F_2)\cdot w_kdxdt=0.\end{aligned}\ee
 In a similar way to Step 1, we can prove that the map
$$\mathbb C\ni z\mapsto \int_\R h(x_n)e^{-z \tau x_n}dx_n$$
is holomorphic in $\mathbb C$ and non uniformly vanishing. Thus, there exists $\lambda_1,\lambda_2\in(0,1)$, $\lambda_1<\lambda_2$ such that
$$\int_\R h(x_n)e^{-\tau\lambda x_n}dx_n\neq0,\quad \lambda\in (\lambda_1,\lambda_2).$$
Combining this with \eqref{t2k}, we obtain
$$\int_\R\int_{\R^{n-1}}G(x',t)\cdot\omega_k e^{-\nu\tau^2 t}e^{\tau\sqrt{1-\lambda^2}x'\cdot\omega}dx'=0,\quad \tau>0,\  \lambda\in(\lambda_1,\lambda_2),$$
which implies that
\bel{t2l}\int_\R\int_{\R^{n-1}}G(x',t)\cdot\omega_k e^{-\nu\tau^2 t}e^{\mu x'\cdot\omega}dx'=0,\quad \tau>0,\  \mu\in\left(\tau \sqrt{1-\lambda_1^2},\tau \sqrt{1-\lambda_2^2}\right).\ee
Now, recalling that $G$ is compactly supported we deduce that, for all $\tau>0$, the map
$$\mathbb C\ni z\mapsto\int_\R\int_{\R^{n-1}}G(x',t)\cdot\omega_k e^{-\nu\tau^2 t}e^{z x'\cdot\omega}dx'$$
is holomorphic in $\mathbb C$ and condition \eqref{t2l} combined with properties of unique continuation for holomorphic functions implies
\bel{t2m}\int_\R\int_{\R^{n-1}}G(x',t)\cdot\omega_k e^{-\nu\tau^2 t}e^{-i\rho x'\cdot\omega}dx'=0,\quad \tau>0,\  \rho>0,\ k=1,\ldots,n-2.\ee
Moreover, using the fact that $\nabla'\cdot G\equiv0$, we find
$$\int_\R\int_{\R^{n-1}}G(x',t)\cdot\omega e^{-\nu\tau^2 t}e^{-i\rho x'\cdot\omega}dx'=\frac{1}{i\rho} \int_\R\int_{\R^{n-1}}\nabla'\cdot G(x',t) e^{-\nu\tau^2 t}e^{-i\rho x'\cdot\omega}dx'=0.$$
Combining this with the fact that $\{\omega,\omega_1,\ldots,\omega_{n-2}\}$ is an orthonormal basis of $\R^{n-1}$, we obtain
\bel{t2n}\int_\R\int_{\R^{n-1}}G(x',t) e^{-\nu\tau^2 t}e^{-i\rho x'\cdot\omega}dx'=0,\quad \tau>0,\  \rho>0\ee
where we notice that this identity holds true for any $\omega\in\mathbb S^{n-2}$. In the same way, we deduce from \eqref{t2n} that
$$\int_\R\int_{\R^{n-1}}G(x',t) e^{-is t}e^{-i x'\cdot\xi}dx'=0,\quad s\in\R,\  \xi\in\R^{n-1}$$
and the injectivity of the Fourier transform implies that $G\equiv0$. Thus we have $G_1=G_2$ which completes the proof of Theorem \ref{t2}.

\section{Source  reconstruction algorithm}
In this section, we present the numerical reconstruction associated with our theoretical results, focusing specifically on the framework  described in Theorem \ref{t1}.
We consider  $F\in L^2((0,T)\times\R^n)^n$ of the space-time separated form $F(t,x)=\sigma(t)f(x)$  where  $\sigma\in L^2(0,T)$, $\sigma\not\equiv0$,  and $f\in L^2(\R^n)^n$ with supp$(f)\subset\bar{\Omega}$ is an unknown function to reconstruct.  We denote by $u_f$ the solution of \eqref{eq1}. 

First, we observe that for $u_f$,  the solution of the system \eqref{eq1},  fixing $g=u_f|_{\partial\Omega\times(0,T)}$ the map $v=u_f|_{\R^n\setminus\bar{\Omega}\times(0,T)}$ will be the unique solution of the exterior system. 
\begin{equation}\label{eq44}
\begin{cases}
\partial_tv -\nu\Delta v+\nabla p =  0 & \mbox{in }\R^n\setminus\bar{\Omega}\times(0,T),\\
 \nabla\cdot v= 0 & \mbox{in } \R^n\setminus\bar{\Omega}\times(0,T), \\
v(x,0)=0, & x\in \R^n\setminus\bar{\Omega},\\
v(x,t)=g(x,t), &\ (x,t)\in\partial\Omega\times(0,T).
\end{cases}
\end{equation}
The uniqueness of the solution of the system \eqref{eq44} enables us to define the  mapping 
$$
\Lambda: u_f|_{\partial\Omega\times(0,T)}=g\mapsto v=u_f|_{\R^n\setminus\bar{\Omega}\times(0,T)}.
$$
Moreover, under appropriate regularity assumptions, we can establish the continuity of the map $\Lambda$.  Following this observation, we will present  our numerical reconstruction results using  the data $u_f|_{\R^n\setminus\Omega\times(0,T)}$  instead of  $u_f|_{\partial\Omega\times(0,T)}$. 
Suppose  we have access to the noisy contaminated measurement $u^\delta\in L^2(\R^n\setminus\Omega\times(0,T))^n$ with noise level $\delta>0$,  such that  for $f_{true}\in L^2(\R^n)^n$,  the source term to be determined,  we have 
$$
\norm{u_{f_{true}}-u^\delta}_{L^2(\R^n\setminus\bar{\Omega}\times(0,T))^n}\leq\delta.
$$
Then, the numerical reconstruction of the source term $f_{true}$ can be formulated
as a least squares problem with Tikhonov regularization, expressed  in terms of the  minimization of the following functional
$$
J_\lambda(f)=\norm{u_{f}-u^\delta}_{L^2(\R^n\setminus\bar{\Omega}\times(0,T))^n}^2+\lambda\norm{f}_{L^2(\R^n)^n}^2,\quad f\in L^2(\R^n),\ \textrm{supp$(f)\subset\bar{\Omega}$},
$$
with $\lambda>0$ the regularization parameter. By considering the  Fr\'echet derivative of this functional, for any $f,h\in L^2(\R^n)$ supported on $\bar{\Omega}$,  we find
$$
J_\lambda'(f)h=2\int_0^T\int_{\R^n\setminus\bar{\Omega}}(u_f-u^\delta)\cdot u'(f)hdxdt+2\lambda \int_{\R^n}f\cdot hdx.
$$
Furthermore, $v=u'(f)h $, solves the problem
\begin{equation}\label{eqqq2}
\begin{cases}
\partial_tv -\nu\Delta v+\nabla p_1 =  \sigma(t)h(x), & \mbox{in }\R^n\times(0,T),\\
 \nabla\cdot v= 0, & \mbox{on } \R^n\times(0,T), \\
v(x,0)=0, & x\in\R^n,
\end{cases}
\end{equation}
In order to reduce the computational costs in evaluating the Fr\'echet derivatives, we introduce  the adjoint   system of (\ref{eq1}), which is formulated as  the following backward terminal value problem: 
\begin{equation}\label{eq1001}
\begin{cases}
-\partial_tz_f -\nu\Delta z_f+\nabla q =  (u_f(x,t)-u^\delta(x,t))\chi_{\R^n\setminus\bar{\Omega}}(x), & \mbox{in }\R^n\times(0,T),\\
 \nabla\cdot z_f= 0, & \mbox{on } \R^n\times(0,T), \\
z_f(x,T)=0, & x\in \R^n,
\end{cases}
\end{equation}
where $\chi_{\R^n\setminus\bar{\Omega}}$ denotes the charactersitic function of ${\R^n\setminus\bar{\Omega}}$
and $z_f\in L^2(0,T;\mathbb V)\cap H^1(0,T;\mathbb V')$ is the solution of the system. Using $z_f$, we obtain
$$
\begin{aligned}J_\lambda'(f)h&=2\int_0^T\int_{\R^n}(-\partial_tz_f -\nu\Delta z_f+\nabla q)\cdot vdxdt+2\lambda \int_{\R^n}f\cdot hdx\\
&=2\int_0^T\int_{\R^n}z_f\cdot (\partial_tv -\nu\Delta v+\nabla p_1)dxdt+2\lambda \int_{\R^n}f\cdot hdx\\
&=2\int_{\R^n}\left(\int_0^T\sigma(t)z_f(x,t)dt+\lambda f(x)\right)\cdot h(x)dx.\end{aligned}
$$
Therefore, by fixing $c>0$, the equation 
$J_\lambda'(f)=0$ is equivalent to $\lambda f|_{\Omega}=-\int_0^T\sigma(t)z_f(\cdot,t)|_{\Omega}dt$ or, equivalently, to 
\begin{equation}\label{reconstEq}
f|_{\Omega}=\frac{c}{c+\lambda}f|_{\Omega}-\frac{1}{c+\lambda}\int_0^T\sigma(t)z_f(\cdot,t)|_{\Omega}dt.
\end{equation}
Hence, the  reconstruction  of $f_{true}$ is achievable by  solving the equation (\ref{reconstEq}) through an iterative reconstruction algorithm outlined as follows. We start with $f_0\in  L^2(\R^n)^n$ with supp$(f_0)\subset\bar{\Omega}$ and   consider $u_{f_0}$,  the solution of \eqref{eq1} with $ F(x,t)=\sigma(t)f_0(x)$.
Next,  we consider $z_{f_0}$ the solution of \eqref{eq1001} with $f=f_0$. We then define the sequence $(f_k)_{k\in\mathbb N}$ of $L^2(\R^n)^n$,  supported on $\bar{\Omega}$, using the following iterative process: 
\begin{equation}\label{IterativeReconstEq}
f_{k+1}|_{\Omega}=\frac{c}{c+\lambda}f_k|_{\Omega}-\frac{1}{c+\lambda}\int_0^T\sigma(t)z_{f_k}(\cdot,t)|_{\Omega}dt, \; k=0,1,\dots, 
\end{equation}
where $c>0$ is a tuning parameter that balances the influence  between previous and current steps. \par The convergence of the scheme depends critically   on the choice of $c$. \par 

The reconstruction   algorithm is  as follows:
\begin{enumerate}[i.] 
\item  Choose a tolerance $\tau >0$, a regularization  parameter $\lambda>0$, and a tuning constant $c>0$. Provide  an initial guess $f_0\in L^2(\R^n)^n$ with $n=2$ or $n=3$, and set $k=0$. 
\item Compute $f_{k+1}$ using  the iterative update (\ref{IterativeReconstEq}).
\item If $\norm{f_{k+1}-f_k}_{L^2(\Omega)^n}/\norm{f_k}_{L^2(\Omega)^n}\leq \tau
$, stop the iteration. Otherwise, $k\leftarrow k+1$.
\end{enumerate}
\section{Numerical experiments}
In this implementation, we apply  the iterative reconstruction algorithm to approximate the spatial source term $f$ in  the unsteady Stokes system on a bounded domain.  
Let $R>0$ be  a large constant, $\tilde\Omega=]0,R[^2$ be a bounded  set of $\R^2$  containing    $\bar\Omega$ and  $T>0$. Assuming  $F\in L^2(0,T;L^2(\tilde\Omega)^2)$  and 
$u_0\in L^2(\tilde\Omega)^2$ being such that $\nabla\cdot F\in L^2(0,T;L^2(\tilde\Omega))$, $\nabla\cdot u_0\equiv0$. 
To enforce the incompressibility condition  $\nabla \cdot \mathbf{u} = 0$, we use the well-known penalty method for Stokes system.  This approach modifies the Stokes system, making it more tractable numerically. For further details, see \cite{GiraultRaviart}. \par 
Let $0<\varepsilon<1$ be  a penalty parameter and consider the following unsteady  following  equations with homogeneous Dirichlet boundary condition: 
\begin{equation}\label{StokesNumerics}
\begin{cases}
\partial_tu -\nu\Delta u+\nabla p =  F(x,t), & \mbox{in }\tilde\Omega\times(0,T),\\
 \nabla\cdot u= \varepsilon p, & \mbox{in } \tilde\Omega\times(0,T), \\
 u(x,t)=0, & (x,t)\in \partial\tilde\Omega\times(0,T), \\
 u(x,0)=u_0(x), & x\in\tilde\Omega,
\end{cases}
\end{equation}
where $u\in L^2(0,T; H^1_0(\tilde\Omega)^2)$ and  $p\in L^2(0,T; L^2_0(\tilde\Omega))$ with  $L^2_0(\tilde\Omega)$ the space of function  in $L^2(\tilde\Omega)$ with null mean value on $\tilde\Omega$.  We assume here that $u_0$ is known and supp$(F)\subset \Omega\times[0,T]$ with $\Omega$ a bounded open and simply connected set of $\tilde \Omega$ with Lipschitz boundary.  We also  introduce the adjoint system of (\ref{StokesNumerics}) which is formulated by: 
\begin{equation}\label{eq100}
\begin{cases}
-\partial_tz_f -\nu\Delta z_f+\nabla q =  (u_f(x,t)-u^\delta(x,t))\chi_{\tilde\Omega\setminus\bar{\Omega}}(x), & \mbox{in }\tilde\Omega\times(0,T),\\
 \nabla\cdot z_f= \varepsilon q, & \mbox{on } \tilde\Omega\times(0,T), \\
  z_f(x,t)=0, & (x,t)\in \partial\tilde\Omega\times(0,T), \\
z_f(x,T)=0, & x\in \tilde\Omega,
\end{cases}
\end{equation}
where $\chi_{\tilde\Omega\setminus\bar{\Omega}}$ denotes the characteristic function of ${\tilde\Omega\setminus\bar{\Omega}}$
and $z_f\in L^2(0,T;\mathbb V)\cap H^1(0,T;\mathbb V')$ is the solution of the system.\par 
We now consider the weak penalized formulation  of (\ref{StokesNumerics})   which can be stated as follows: \par  Find $(u,p)\in L^2(0,T; H^1_0(\tilde\Omega;\R^2))\times L^2(0,T; L^2_0(\tilde\Omega))$ such that :
\begin{align}\label{WeakStokes1}
\int_{\tilde \Omega}\partial_t u\cdot v\; dx +\nu\int_{\tilde \Omega} \nabla u:\nabla v\; dx +\int_{\tilde \Omega}\nabla p\cdot v &=  \int_{\tilde \Omega} F(x,t)\cdot v\; dx, \forall v\in H^1_0(\tilde\Omega;\R^2)\\ 
\label{WeakStokes2}
 -\int_{\tilde \Omega}\nabla\cdot u\,  q&=  \varepsilon\int_{\tilde \Omega}p q,  \forall q\in L^2(\tilde\Omega). 
\end{align}
It is important to note that, due  to equation (\ref{WeakStokes2}) and the fact that $\varepsilon >0$, the pressure $p$ is uniquely determined. Any additive constant would contradict the relationship  $\nabla u =\varepsilon p$.  \par 
Our objective is to implement the finite element discretization for the penalized formulation of the Stokes problem and its adjoint system. We will then apply the iterative reconstruction process (\ref{IterativeReconstEq}) introduced in the previous section to numerically identify the spatial component of the source term in problem (\ref{StokesNumerics}).\par 

We present three tests. Without loss of generality,  we set 
$$
\tilde\Omega=]0,3[^2, \quad T=1, \quad F(x,t)=\sigma(t) f_{true}(x). 
$$
When the true source terme $f_{true}$ is given,  we produce the noisy observation data  by adding uniform random noises to the true data, {\it i.e.}
$$
u^\delta(x,t)=u_{f_{true}}(x,t)+\delta \mbox{rand}(-1,1)u_{f_{true}}(x,t), (x,t)\in \tilde\Omega\times]0,T[,  
$$
where $\mbox{rand}(-1,1)$ denotes the uniformly distributed random numbers in $[-1,1]$ and $\delta\geq 0$ is the level noise. \par 
Throughout this section, we will fix the known temporal component $\sigma$ in the source term, the regularization parameter $\lambda$, the penalty parameter $\varepsilon$ and the tolerance parameter $\tau$  as 
$$
\sigma(t)=e^t,\quad \lambda=10^{-5}, \quad \varepsilon=10^{-9}, \quad  \mbox{and }  \tau=10^{-3}. 
$$
In addition to illustrative figures,   we  evaluate the numerical performance for different examples by the relative $L^2$-norm error
$$
\textrm{err}:={{\|f_k-f_{true}\|_{L^2}}\over{\|f_{true}\|_{L^2}}}
$$
where $k$ is the iteration number and $f_k$ the reconstructed solution  produced by   (\ref{IterativeReconstEq})  when    (iii) of the reconstruction algorithm  is checked  for the  given tolerance parameter $\tau$. \par The penalized Stokes problem and  its adjoint backward problem are solved by explicit  numerical scheme : finite difference method in time and finite element method in space. As usual, to approximate the unknowns of the unsteady Stokes system using finite element discretization, we should choose a discretization that effectively handles the velocity and pressure fields while satisfying the incompressibility constraint \(\nabla \cdot \mathbf{u} = 0\). A common and effective choice is the $P_1$Bubble-$P_1$ finite element method, which uses  piecewise linear polynomials with bubble functions  for the velocity components and piecewise linear polynomials for the pressure, as introduced by Arnold, Brezzi, and Fortin \cite{ArBrForin}.  This choice ensures that the finite element spaces for the velocity and pressure satisfy the Ladyzhenskaya-Babuska-Brezzi (inf-sup) condition, which is necessary for the stability of the mixed finite element formulation. Additionaly, it  leads  to a relatively low number degrees of freedom while providing a good approximate solution. 
\subsection{Example 1. } $f_{true}(x)=(0.1+x_1/6+x_2/6)  \chi_\Omega (x)$.
We set a time step $\Delta t=0.07$ and the mesh size $h=0.1$. The support of  $f_{true}$ is a subset of $\bar\Omega=[3/4;9/4]$. The initial guess is $f_0(x)=0.8\chi_\Omega (x)$.     \par 
\begin{table}[htp]
\caption{Parameter setting and the corresponding numerical performances in Case 1.}
\begin{center}
\begin{tabular}{|c|c|c|c|}\hline C & k & err & Illustration  \\
\hline 0.01 & 10 & 15\% & \mbox{Figure (a)} \\
\hline 0.01 & 20 & 14,2\% &  \mbox{Figure (b)} \\
\hline 0.01 & 30 & 13,8\% &  \mbox{Figure (c)} \\
\hline
\end{tabular}
\end{center}
\label{Parameter setting and the corresponding numerical performances in Case 1.}
\end{table}
\begin{figure}[htbp!]
\begin{center}
\includegraphics[scale=0.25]{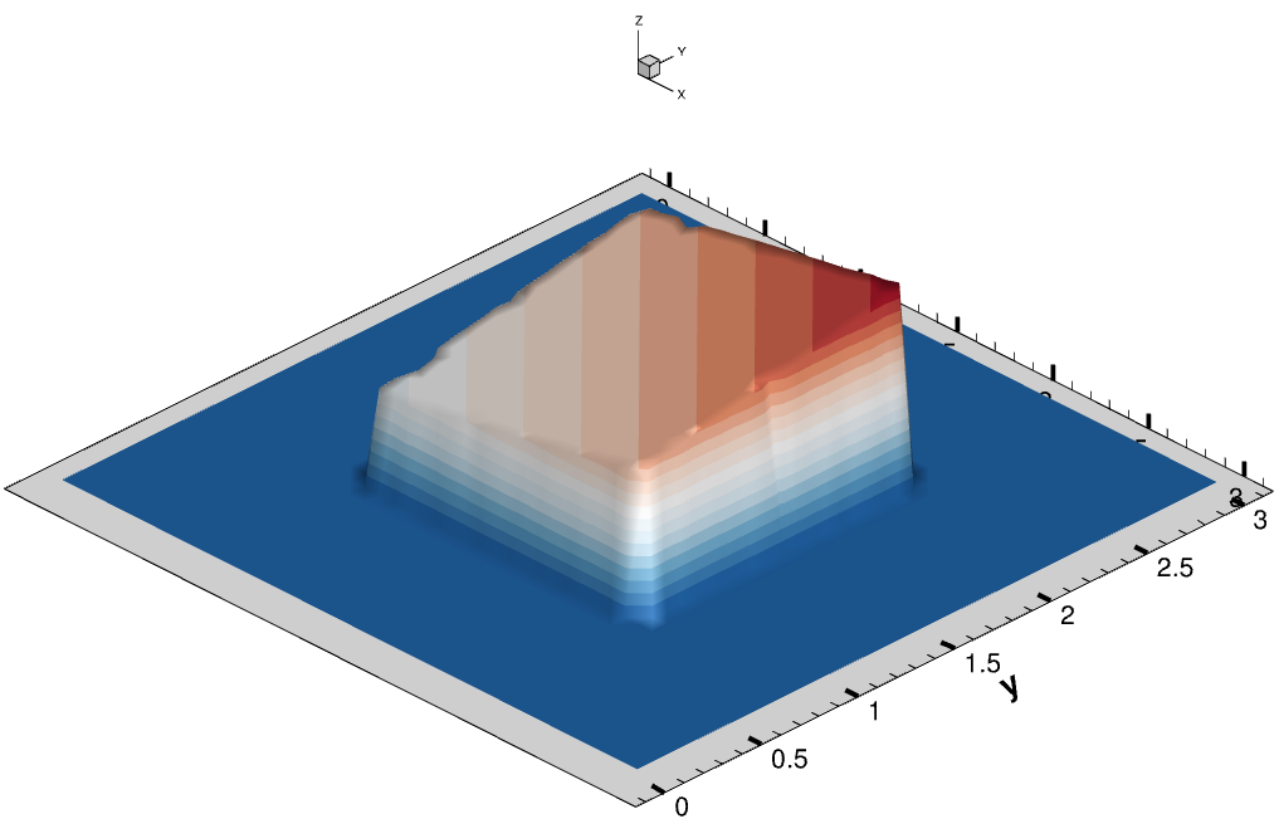} 
\caption{The solution $f_{true}$}
\label{default}
\end{center}
\end{figure}
\begin{figure}[htbp!]
\begin{center}
\includegraphics[scale=0.2]{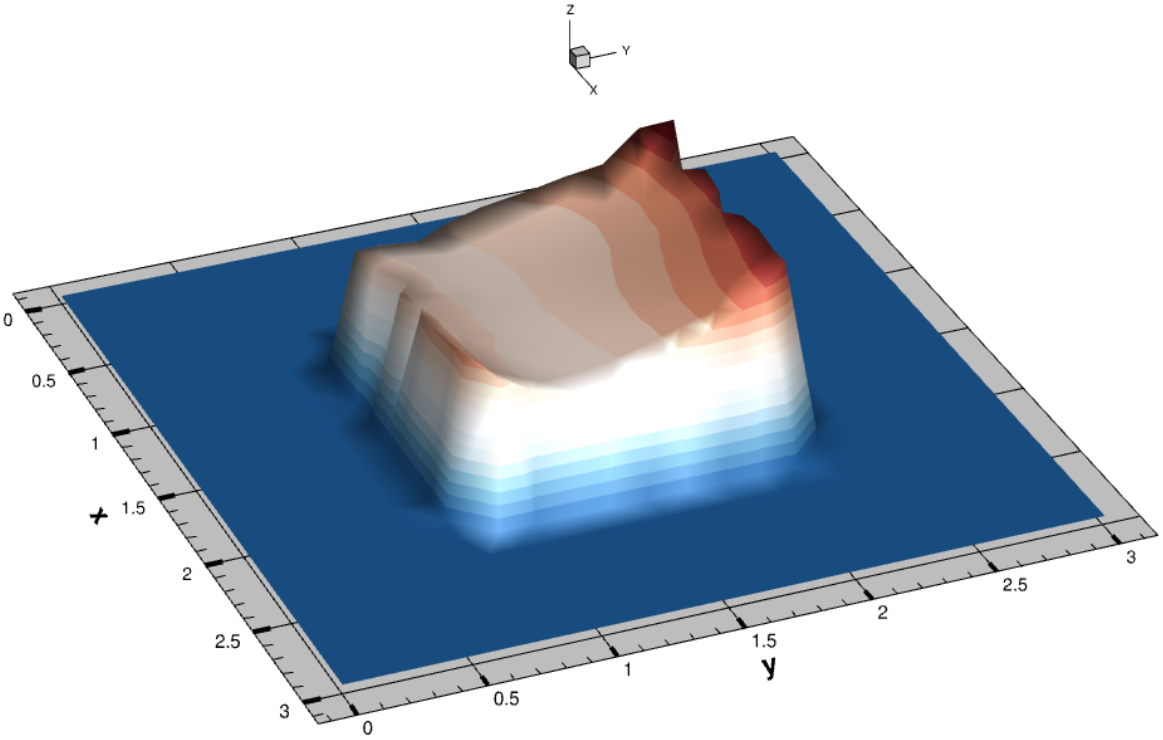} \quad \includegraphics[scale=0.2]{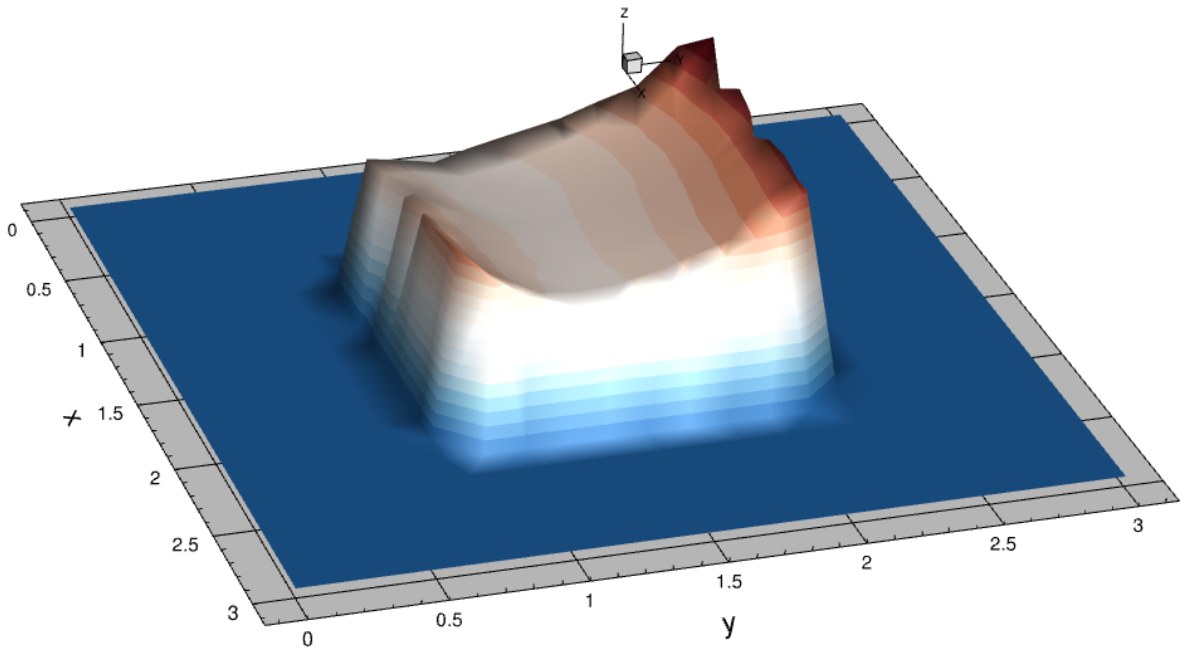} \quad \includegraphics[scale=0.2]{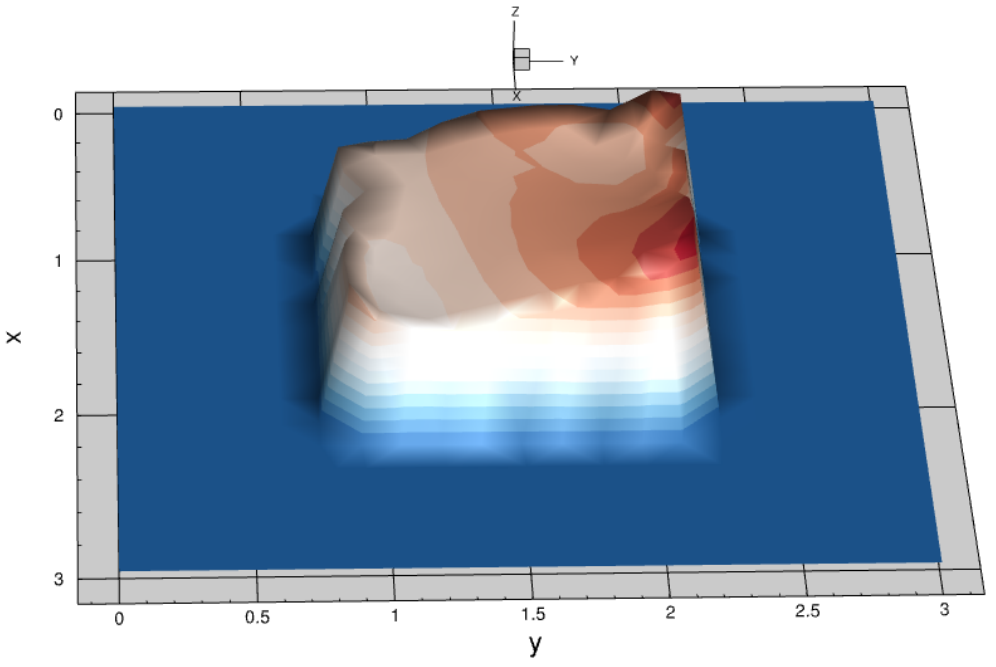} 
\caption{The reconstructions $f_k$ of $f_{true}$ are depicted for $k=10$ (Fig. (a), left),  $k=20$ (Fig. (b), middle) and $k=30$ (Fig. (c),  right), with  the constant $C=0.01$.}
\label{default}
\end{center}
\end{figure}
\subsection{Example 2.} $f_{true}(x)=e^{-((x_1-3/2)^2+(x_2-3/2)^2)}  \chi_\Omega (x)$.
We set a time step $\Delta t=0.07$ and the mesh size $h=0.1$. The support of  $f_{true}$ is a subset of $\bar\Omega=[3/4;9/4]$. The initial guess is $f_0(x)=2\chi_\Omega (x)$.   \par 

\begin{table}[htbp!]
\caption{Parameter setting and the corresponding numerical performances in Case 2.}
\begin{center}
\begin{tabular}{|c|c|c|c|}\hline C & k & err & Illustration  \\
\hline 0.01 & 10 & 22\% & \mbox{Figure (a)} \\
\hline 0.01 & 30 & 18,5\% &  \mbox{Figure (b)}  \\
\hline 0.01 & 30 & 18\% &  \mbox{Figure (c)} \\
\hline
\end{tabular}
\end{center}
\label{Parameter setting and the corresponding numerical performances in Case 2.}
\end{table}

\begin{figure}[htbp!]
\begin{center}
\includegraphics[scale=0.2]{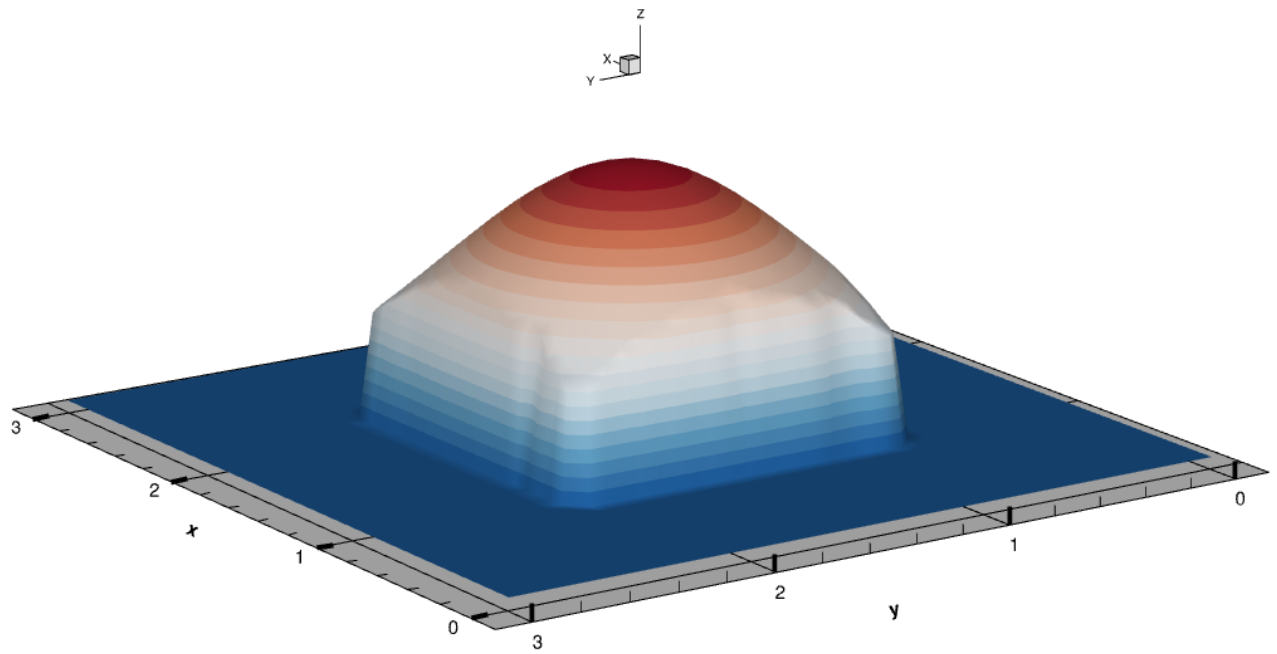}
\caption{The true solution $f_{true}$.}
\label{default}
\end{center}
\end{figure}
\begin{figure}[htbp!]
\begin{center}
\includegraphics[scale=0.2]{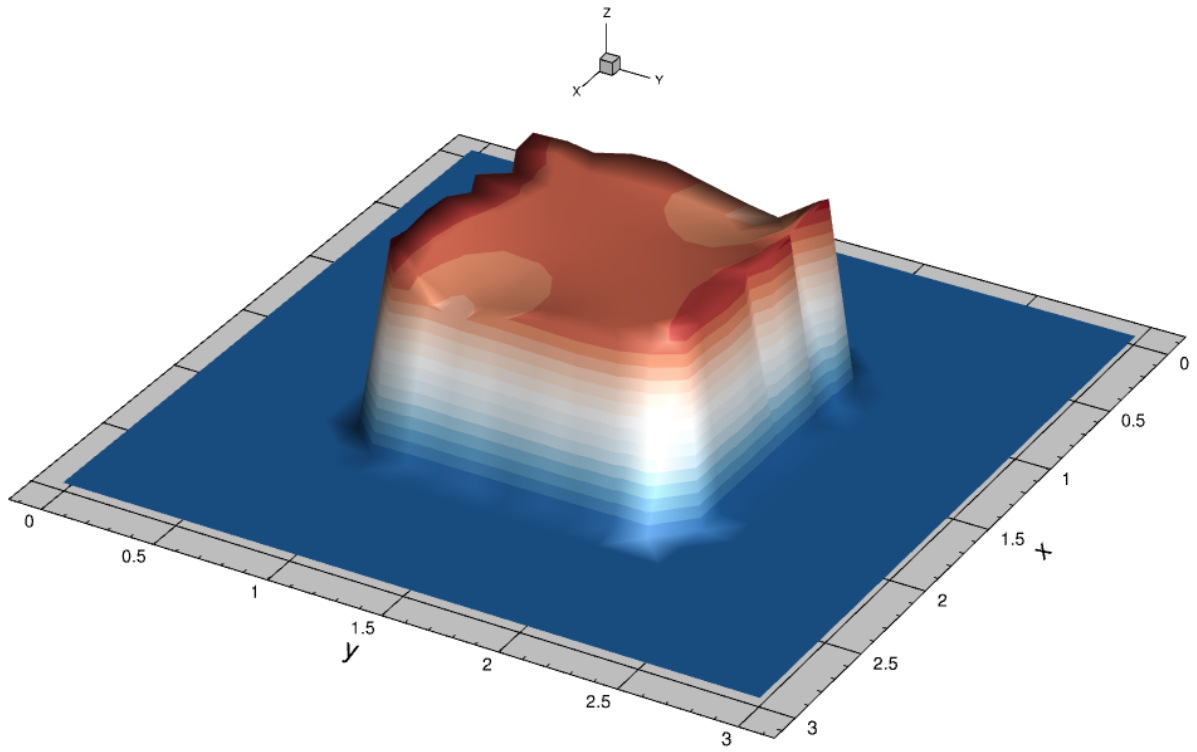} \quad \includegraphics[scale=0.2]{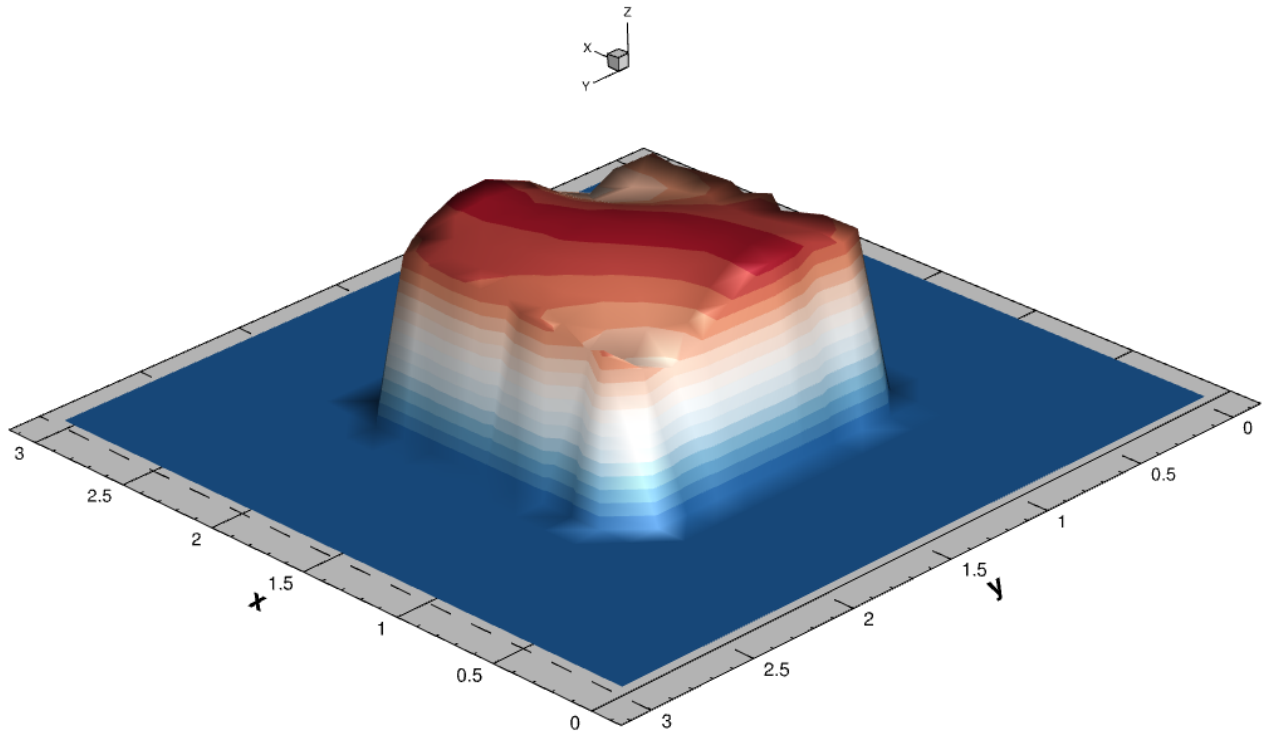} \quad \includegraphics[scale=0.2]{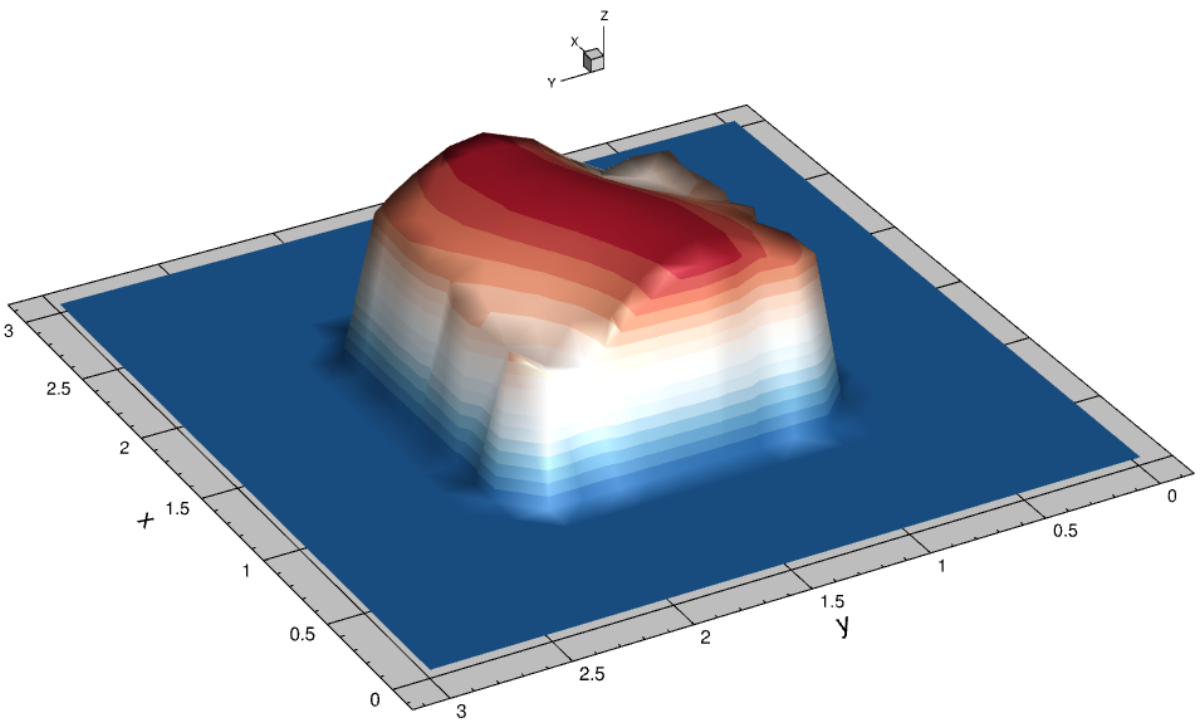} 
\caption{The reconstructions $f_k$ of $f_{true}$ are depicted for $k=10$ (Fig. (a), left),  $k=20$ (Fig. (b), middlle) and $k=30$ (Fig. (c),right), with  the constant $C=0.01$.}
\label{default}
\end{center}
\end{figure}
\subsection{Example  3. } $f_{true}(x)=\cos{{\pi x_1}\over{3}}\cos{{\pi x_2}\over{3}}  \chi_\Omega (x)$.
We set a time step $\Delta t=0.07$ and the mesh size $h=0.1$. The support of  $f_{true}$ is a subset of $\Omega=]3/4;9/4[$.  The initial guess is $f_0(x)=0.5\chi_\Omega (x)$.   \par  
\begin{table}[htp]
\caption{Parameter setting and the corresponding numerical performances in Case 3.}
\begin{center}
\begin{tabular}{|c|c|c|c|}\hline C & k & err & Illustration  \\
\hline 0.01 & 10 & 13\% & \mbox{Figure (a)} \\
\hline 0.01 & 20 & 9,7\% &  \mbox{Figure (b)} \\
\hline 0.01 & 30 & 7\% &  \mbox{Figure (c)} \\
\hline
\end{tabular}
\end{center}
\label{Parameter setting and the corresponding numerical performances in Case 3.}
\end{table}
\begin{figure}[htbp!]
\begin{center}
\includegraphics[scale=0.4]{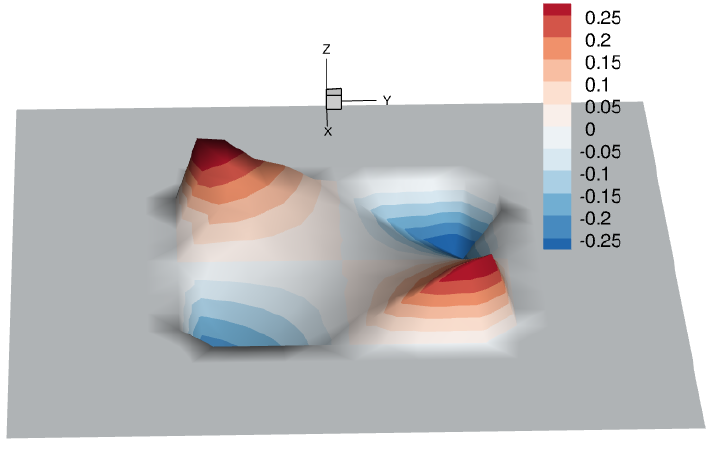} 
\caption{The true solution $f_{true}$}
\label{default}
\end{center}
\end{figure}
 
\begin{figure}[htbp!]
\begin{center}
\includegraphics[scale=0.23]{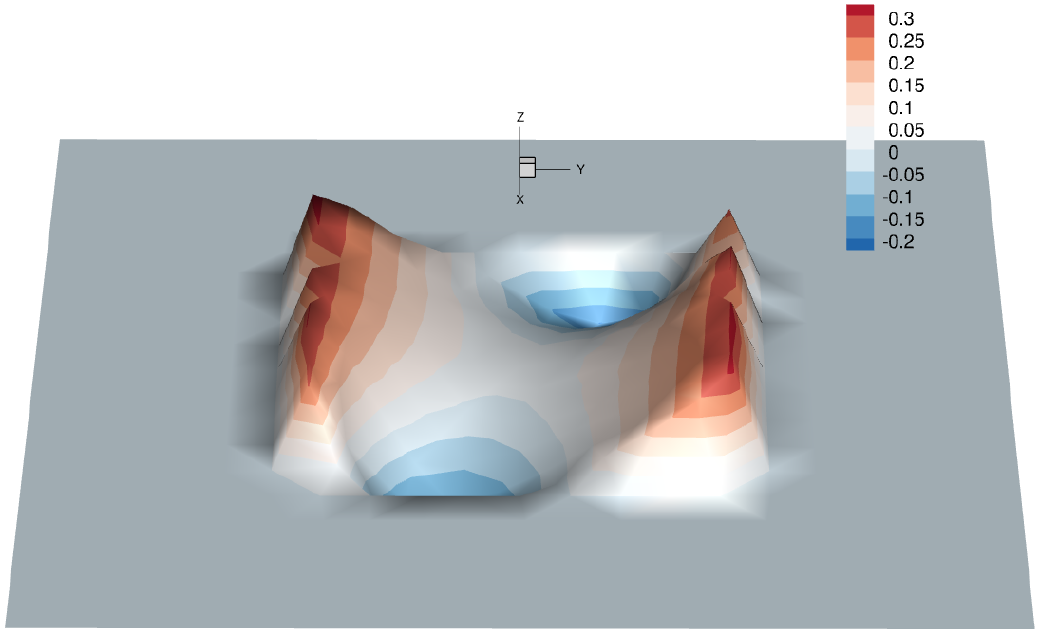} \quad \includegraphics[scale=0.23]{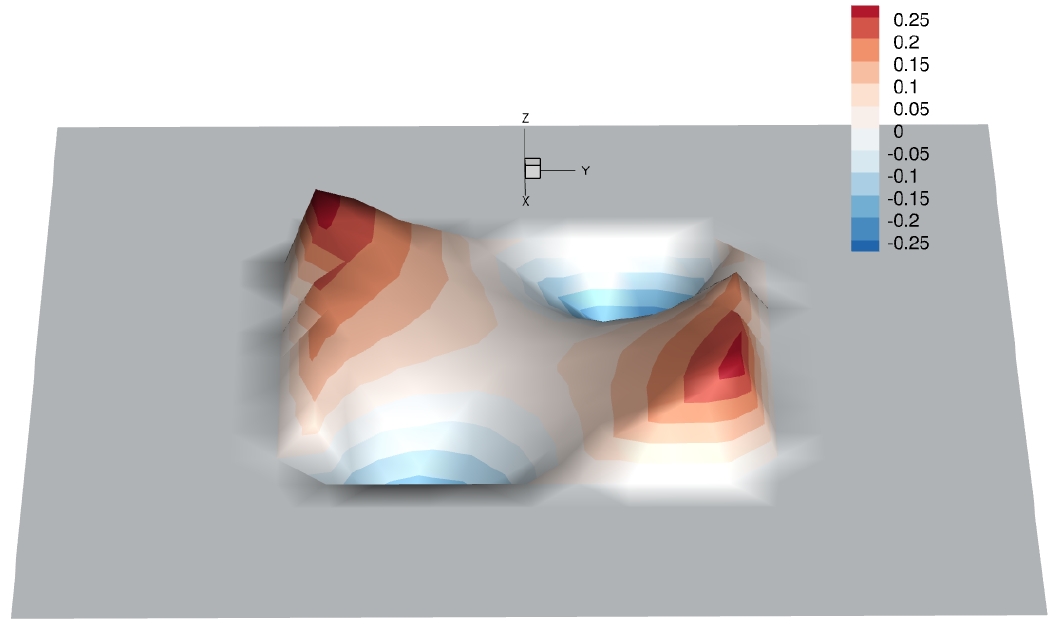} \quad \includegraphics[scale=0.23]{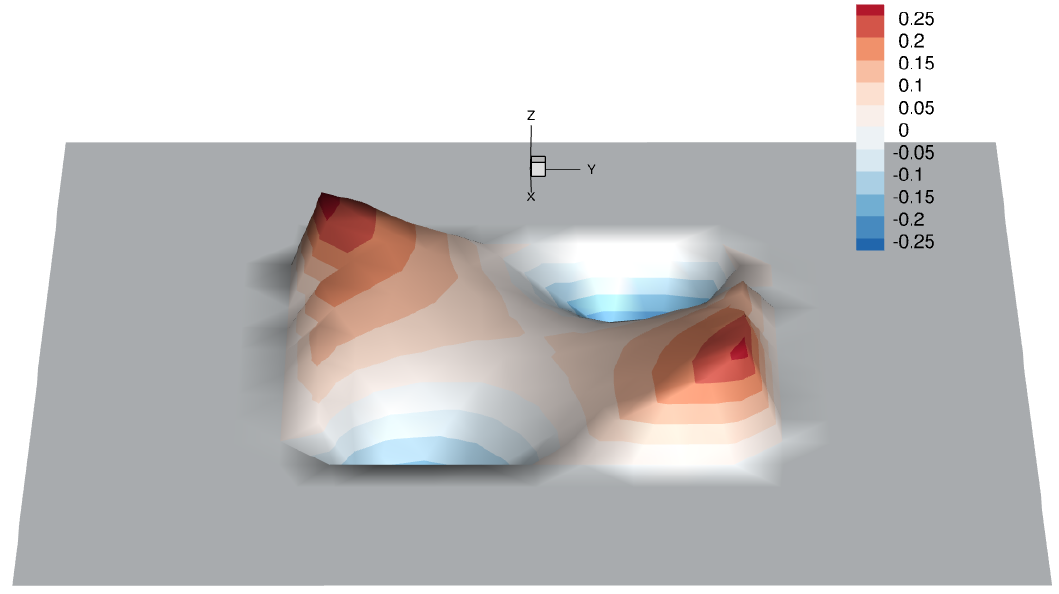} 
\caption{The reconstructions $f_k$ of $f_{true}$ are depicted for $k=10$ (Fig. (a), left),  $k=20$ (Fig. (b), middlle) and $k=30$ (Fig. (c), right), with  the constant $C=0.01$.}
\label{default}
\end{center}
\end{figure}
\subsection{Comments} 
The method was implemented in FreeFem++ (\url{https://freefem.org}), a high level, free software package  managing mesh generation and adaption, matrix assembly, and linear system resolution automatically and generally uses as input the resolution domain, boundary conditions, and bilinear and linear forms.  
Visualization of the three-dimensional results was conducted using Tecplot software. All computations were performed on a 1.4 GHz quad-core Intel Core i5 processor MacBook Pro laptop. \par  
Numerical simulations indicate that the source term can be reconstructed with a relative error ranging from $7\%$ to $18\%$. Furthermore, our observations during the simulations reveal that the iterative reconstruction algorithm (\ref{IterativeReconstEq}) is highly sensitive to the parameter $c$.  This parameter  acts as a scaling factor that balances the influence of the previous iterate $f_k$ and the correction term $\int_0^T\sigma(t)z_{f_k}(\cdot,t) dt$.  As $c$ increases, the weight $\frac{c}{c+\lambda}$ increases, giving more influence to the previous iterate $f_k$ in the next step $f_{k+1}$.  Conversely, as $c$ decreases, the weight $\frac{1}{c+\lambda}$ increases, giving more influence to the correction term.  When $c$ is large relative to $\lambda$, $\frac{c}{c+\lambda} \approx 1$ and $\frac{1}{c+\lambda} \approx \frac{1}{c}$. This makes the update $f_{k+1}$ rely heavily on $f_k$, potentially slowing down convergence since the correction term has less impact.  However, large $c$ can contribute to stability by preventing abrupt changes in the iterates. When $c$ is small relative to $\lambda$, $\frac{c}{c+\lambda} \approx \frac{c}{\lambda}$ and $\frac{1}{c+\lambda} \approx \frac{1}{\lambda}$. This makes the update $f_{k+1}$ rely heavily on the correction term, potentially accelerating convergence.  However, small $c$ can lead to instability if the correction term introduces significant changes at each step.  For our simulations, we empirically chose  $c=0.01$  and $\lambda=10^{-5}$. This choice ensures that: 
$$f_{k+1}|_{\Omega} \approx f_k|_{\Omega} - \frac{1}{c} \int_0^T \sigma(t) z_{f_k}(\cdot, t) |_{\Omega} dt. 
$$
This leads to gradual updates and potentially slower convergence.
\par 
The convergence and stability of the algorithm (\ref{IterativeReconstEq}) rely on the proper choice of $c$. A balance must be struck to ensure the algorithm converges efficiently without becoming unstable. Large $c$ values favor stability but may slow down convergence, while small $c$ values can speed up convergence but risk instability

\section{Appendix}
\subsection{Analytic properties of solutions of Stokes system}
In this subsection we consider analytic properties of the solution $u$ of \eqref{eq1} with $F\equiv0$. More precisely, we show the following.

\begin{proposition}\label{p1} Let $u\in L^2(0,T;\mathbb V)\cap H^1(0,T;\mathbb V')$ be solution  of \eqref{eq1} with $F\equiv0$ and $u_0\in\mathbb H$. Then, for any $t\in(0,T)$, the map $\R^n\ni x\mapsto u(x,t)$ is real analytic.\end{proposition}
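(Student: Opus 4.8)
The plan is to solve the homogeneous system explicitly by applying the spatial Fourier transform and to read off analyticity from the resulting Gaussian factor. Writing $\hat{\cdot}$ for the Fourier transform in $x$, the divergence-free condition becomes $\xi\cdot\hat u(\xi,t)=0$. Taking the Fourier transform of $\partial_t u-\nu\Delta u+\nabla p=0$ and pairing with $\xi$ shows that $|\xi|^2\hat p(\xi,t)=0$, so the pressure may be taken to vanish. The remaining equation decouples into $n$ scalar ODEs $\partial_t\hat u+\nu|\xi|^2\hat u=0$, whence
\beq
\hat u(\xi,t)=e^{-\nu|\xi|^2 t}\,\hat u_0(\xi),\qquad \xi\in\R^n,\ t\in(0,T).
\eeq
Since $u_0\in\mathbb{H}$ is divergence-free we have $\xi\cdot\hat u_0(\xi)=0$ a.e., so this field is automatically divergence-free, and by the uniqueness of solutions recalled via \cite[Theorem 1.1, Chapter III]{Te} it coincides with the weak solution $u$.

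Next I would extract analyticity from the Gaussian factor $e^{-\nu|\xi|^2t}$. For every $a>0$ and every fixed $t\in(0,T)$ the elementary bound $2a|\xi|-2\nu t|\xi|^2\leq a^2/(2\nu t)$ gives
\beq
\int_{\R^n}e^{2a|\xi|}\big|\hat u(\xi,t)\big|^2\,d\xi
\leq e^{a^2/(2\nu t)}\int_{\R^n}\big|\hat u_0(\xi)\big|^2\,d\xi<\infty,
\eeq
since $\hat u_0\in L^2(\R^n)^n$. Hence $e^{a|\xi|}\hat u(\cdot,t)\in L^2(\R^n)^n$ for every $a>0$.

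Finally, this exponential decay of the Fourier transform is precisely the Paley--Wiener condition ensuring that $x\mapsto u(x,t)$ extends to a holomorphic function on the tube $\{x+iy:\ |y|<a\}$ for each $a>0$, hence to an entire function of $z\in\C^n$; in particular it is real analytic on $\R^n$ for each $t\in(0,T)$, as claimed. Equivalently, one may note that $u(\cdot,t)=G_{\nu t}*u_0$ with $G_{\nu t}$ the heat kernel, whose Gaussian extends to an entire function of the spatial variable, so that convolution with the $L^2$ datum $u_0$ yields a real-analytic field. The only genuinely delicate point is the justification that the pressure drops out and that the explicit Fourier solution is the weak solution; both follow from the divergence-free structure of $u_0\in\mathbb{H}$ together with the uniqueness statement quoted above, so no further obstacle arises.
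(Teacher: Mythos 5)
Your proof is correct, but it takes a genuinely different route from the paper's. The paper starts from the heat-kernel representation $u(t,x)=\int_{\R^n}(4\pi\nu t)^{-n/2}e^{-|x-x'|^2/(4\nu t)}u_0(x')\,dx'$ and works entirely on the physical side: it complexifies the kernel in $z=x+iy$, dominates the integrand and its first derivatives by integrable functions of $x'$ uniformly on balls $|z|\leq R$, and uses dominated convergence together with the Cauchy--Riemann equations to conclude that the integral defines an entire function of $z\in\C^n$. You instead work on the Fourier side: you derive $\hat u(\xi,t)=e^{-\nu|\xi|^2t}\hat u_0(\xi)$, note that the Gaussian factor gives $e^{a|\xi|}\hat u(\cdot,t)\in L^2(\R^n)^n$ for every $a>0$, and conclude by a Paley--Wiener argument. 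Both proofs rest on the same underlying fact, namely that for $F\equiv0$ and divergence-free $u_0$ the Stokes flow is the heat semigroup applied to $u_0$ with constant pressure; the paper simply asserts this, while you attempt to justify it. One caveat on your justification: $p$ is only known to lie in $L^2(0,T;L^2_{loc}(\R^n))$, so taking its Fourier transform and writing $|\xi|^2\hat p=0$ presupposes that $p$ is tempered. The clean reading of your argument, which you do supply, is that the explicit Fourier formula defines a candidate pair (with $p\equiv0$) that is divergence-free and solves the system, and uniqueness from \cite[Theorem 1.1, Chapter III]{Te} then identifies it with $u$; with that reading there is no gap. As for what each approach buys: yours gets entireness with almost no computation from a standard Paley--Wiener statement and makes the elimination of the pressure explicit, while the paper's stays self-contained at the level of elementary domination estimates and avoids any Fourier analysis of the (merely locally square-integrable) pressure.
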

\begin{proof} This result is well known but we give its proof for sake of completeness. The solution of \eqref{eq1} is given by  
\begin{equation*}
u(t,x) = \int_{\mathbb{R}^n} \frac{1}{(4\pi\,\nu\,t)^{\frac{n}{2}}} 
  e^{-\frac{|x-x'|^2}{4\nu t}} \, u_0(x') dx'.
\end{equation*}
For a fixed time $t>0$ and setting $z=x+i y  \in \mathbb{C}^n$ 
we consider the $n$ complex variable function 
\begin{equation*}
F(z) = \int_{\mathbb{R}^n} f(z,x') dx' 
  \quad\text{where}\quad 
  f(x+ i y,x') = \frac{1}{(4\pi\,\nu\,t)^{\frac{n}{2}}}  
     e^{-\frac{(x+i y-x')\cdot (x+i y-x')}{4\nu t}} \, u_0(x'),  
\end{equation*}
and it follows $u(t,x) = F(x)$. 
We claim that the integrand $z \to F(z)$ is holomorphic in the whole space $\mathbb{C}^n$, 
implying that the solution $x \to u(t,x)$ is real analytic. 

Let $R>0$.
We verify that for $|z|=(\sum_{k=1}^n |z_k|^2)^{\frac12}\leq R$ 
and $1\leq k \leq n$ 
we have 
\begin{align*}
& \left| f(z,x') \right| \,\leq\, 
  g_0(x') := \frac{1}{(4\pi\,\nu\,t)^{\frac{n}{2}}} 
    e^{\frac{R^2+2R |x'|-|x'|^2}{4\nu t}} \, |u_0(x')|, 
\\
& \left| \frac{\partial}{\partial x_k} f(z,x') \right| ,
  \left| \frac{\partial}{\partial y_k} f(z,x') \right| \,\leq\,
  g_1(x') :=  \frac{2(R+|x'|)}{4\nu t} \, g_0(x').
\end{align*}
Since the functions $g_0$ and $g_1$ are integrable
and the function $z \to f(z,x')$ is holomorphic 
in the ball $|z|<R$  and verifies the Cauchy-Riemann equations in each variable separately,  
we can therefore conclude from the Dominated Convergence Theorem that 
the function $F(z)$ is continuously differentiable. 
Moreover it also satisfies the Cauchy-Riemann equations in each variable separately :  
As a result, $F$ is also holomorphic in the ball $|z|<R$ for all $R>0$, 
and consequently in the whole space $\mathbb{C}^n$. 
This concludes the proof of the proposition.  \end{proof}

\subsection{Counterexamples for the determination of general time dependent source term}

In this subsection we recall a counterexample to the  determination of general class of time dependent source terms. For this purpose we fix $\chi\in C^\infty_0(\Omega\times(0,T))^n$ such that $\nabla\times \chi\not\equiv0$ and choose $F_1=\partial_t(\nabla\times \chi)$, $F_2=\nu\Delta (\nabla\times \chi)$. Here $\nabla\times$ denotes the rotational operator on $\R^n$. Then, for $j=1,2$, fixing $u_j$ the solution of \eqref{eq1} with $F=F_j$ and considering $u=u_1-u_2$ we see that $u$ solves \eqref{eq1} with $u_0\equiv0$ and
$$F=\partial_t(\nabla\times \chi) -\nu\Delta (\nabla\times \chi).$$
One can  easily check that $\nabla\times \chi$ is the unique solution of \eqref{eq1} and therefore $u=\nabla\times \chi$. 
Thus, since $\chi$ is equal to 0 in a neighborhood of $\partial\Omega$ and $\partial\mathcal{O}$, we get 
$$u(x,t)=\nabla\times \chi(x,t)=0,\quad (x,t)\in(\partial\Omega\times(0,T))\cup(\partial\mathcal O\times(0,T)).$$
However, since $\nabla\times \chi\not\equiv0$, we deduce that $F\not\equiv0$ since otherwise by the uniqueness of the solution of \eqref{eq1} we would have $\nabla\times \chi=u\equiv0$ which contradicts the assumption imposed to the function $\chi$. This show that we have
$$(u_1(x,t)=u_2(x,t),\quad (x,t)\in(\partial\Omega\times(0,T))\cup(\partial\mathcal O\times(0,T)))
\not\Longrightarrow (F_1=F_2)$$
which proves that it is impossible to determine general class of time dependent source terms.
\subsection{Counterexamples for the determination of  time dependent source term with separated variables}
Let us observe that  counterexamples of the preceding section can also be extended to general source terms 
with separated variables of the form \eqref{source1} and \eqref{source2}. 

$\bullet$ {\bf Counterexample for source terms 
with separated variables of the form \eqref{source1}}: 
We choose $\psi\in C^\infty_0(\Omega)^n$ such that $\nabla\times \psi\not\equiv0$, $\beta\in H^1_0(0,T)$, with $\sigma\not\equiv0$, and choose $F_j$, $j=1,2$, of the form \eqref{source1} with $\sigma=\sigma_j$ and $f=f_j$ given by
$$f_1(x)=\nabla\times \psi(x),\ \sigma_1(t)=\beta'(t),\ \sigma_2(t)=\beta(t),\ f_2(x)=\nu\Delta (\nabla\times \psi)(x),\quad x\in\R^n,\ t\in(0,T).$$
Then, for $j=1,2$, fixing $u_j$ the solution of \eqref{eq1} with $F=F_j$ and considering $u=u_1-u_2$ we see that $u$ solves \eqref{eq1} with $u_0\equiv0$ and
$$F(x,t)=\partial_t(\beta(t)\nabla\times \psi(x))-\nu\Delta(\beta(t) \nabla\times \psi(x)),\quad x\in\R^n,\ t\in(0,T).$$
Repeating the arguments of the previous section, we see that we have
$$u(x,t)=\beta(t)\nabla\times \psi(x),\quad x\in\R^n,\ t\in(0,T)$$
and $F\not\equiv0$. In particular, we have
$$u(x,t)=\beta(t)\nabla\times \psi(x)=0,\quad (x,t)\in(\partial\Omega\times(0,T))\cup(\partial\mathcal O\times(0,T)).$$
which proves that the condition \eqref{t1a} will be fulfilled but \eqref{t1b} is not true. This show that we have
$$\begin{aligned}&u_1(x,t)=u_2(x,t),\  (x,t)\in(\partial\Omega\times(0,T))\cup(\partial\mathcal O\times(0,T))\\
&\not\Longrightarrow (\sigma_1(t) f_1(x)=\sigma_2(t)f_2(x),\ x\in\R^n,\ t\in(0,T))\end{aligned}$$
which proves that it is impossible to determine general source terms with separated variables of the form \eqref{source1}

$\bullet$ {\bf Counterexample for source terms 
with separated variables of the form \eqref{source2}}: 
Let us consider $\chi\in C^\infty_0(\R^{n-1}\times(0,T))^{n-1}$ such that $\nabla'\times \chi\not\equiv0$, where $\nabla'\times$ denotes the rotational operator on $\R^{n-1}$. Choose also $g\in C^\infty_0(\R)$ such that $g\not\equiv0$ and such that for all $t\in[0,T]$, the support of the map $x=(x',x_n)\mapsto \chi(x',t)g(x_n)$ is contained into  $\Omega$. Now  choose $F_j$, $j=1,2$, of the form \eqref{source2} with $G=G_j$ and $h=h_j$ given by
$$G_1(x',t)=\partial_t\nabla'\times \chi(x',t)-\nu\Delta'\nabla'\times \chi(x',t),\ h_1(x_n)=g(x_n),\quad x'\in\R^{n-1},\ x_n\in\R,\ t\in(0,T),$$
$$ G_2(x',t)=\nabla'\times \chi(x',t),\ h_2(x_n)=\nu \partial_{x_n}^2g(x_n),\quad x'\in\R^{n-1},\ x_n\in\R,\ t\in(0,T),$$
where 
$$\Delta'=\sum_{j=1}^{n-1}\partial_{x_j}^2.$$
Then, for $j=1,2$, fixing $u_j$ the solution of \eqref{eq1} with $F=F_j$ and considering $u=u_1-u_2$ we see that $u$ solves \eqref{eq1} with $u_0\equiv0$ and
$$F(x,t)=\left([\partial_t(g(x_n)\nabla'\times \chi(x',t))-\nu\Delta(g(x_n)\nabla'\times \chi(x',t))]^T,0\right)^T,\quad x'\in\R^{n-1},\ x_n\in\R,\ t\in(0,T).$$
Repeating the above arguments, we see that 
$$u(x',x_n,t)= \left(g(x_n)\nabla'\times \chi(x',t)^T,0 \right)^T,\quad x'\in\R^{n-1},\ x_n\in\R,\ t\in(0,T)$$
and $F\not\equiv0$. In particular, we have
$$u(x',x_n,t)=\left(g(x_n)\nabla'\times \chi(x',t)^T,0\right)^T=0,\quad (x',x_n,t)\in(\partial\Omega\times(0,T))\cup(\partial\mathcal O\times(0,T)),$$
which proves that the condition \eqref{t1a} will be fulfilled but \eqref{t1b} is not true. This show that we have
$$\begin{aligned}&u_1(x,t)=u_2(x,t),\  (x,t)\in(\partial\Omega\times(0,T))\cup(\partial\mathcal O\times(0,T))\\
&\not\Longrightarrow (G_1(x',t)h_1(x_n)=G_2(x',t)h_2(x_n),\ x'\in\R^{n-1},\ x_n\in\R,\ t\in(0,T))\end{aligned}$$
which proves that it is impossible to determine general source terms with separated variables of the form \eqref{source2}

\end{document}